\numberwithin{equation}{section}
\numberwithin{figure}{section}
\theoremstyle{plain}
\newtheorem{thm}{\protect\theoremname}
  \theoremstyle{remark}
  \newtheorem{rem}[thm]{\protect\remarkname}
  \theoremstyle{plain}
  \newtheorem{prop}[thm]{\protect\propositionname}
  \theoremstyle{definition}
  \newtheorem*{problem*}{\protect\problemname}
  \newtheorem*{question*}{Question}
  \theoremstyle{plain}
  \newtheorem{lem}[thm]{\protect\lemmaname}
  \theoremstyle{plain}
  \newtheorem{cor}[thm]{\protect\corollaryname}
  \theoremstyle{plain}
  \newtheorem{fact}[thm]{\protect\factname}
\newcommand{\cE}{\mathcal{E}}
\newcommand{\cL}{\mathcal{L}}
\newcommand{\cB}{\mathcal{B}}
\newcommand{\cS}{\mathcal{S}}
\newcommand{\R}{\mathbb{R}}
\newcommand{\N}{\mathbb{N}}
\newcommand{\prob}{\mathbb{P}}
\newcommand{\E}{\mathbb{E}}
\newcommand{\eqdist}{\stackrel{(d)}{=}}
\newcommand{\abs}[1]{\lvert#1\rvert}
\newcommand{\norm}[1]{\lvert\lvert#1\rvert\rvert}
  \providecommand{\corollaryname}{Corollary}
  \providecommand{\factname}{Fact}
  \providecommand{\lemmaname}{Lemma}
  \providecommand{\problemname}{Problem}
  \providecommand{\propositionname}{Proposition}
  \providecommand{\remarkname}{Remark}
\providecommand{\theoremname}{Theorem}
\begin{document}

\title{On the Spectral Gap of Spherical Spin Glass Dynamics}

\author{Reza Gheissari}
\address{R.\ Gheissari\hfill\break
Courant Institute\\ New York University\\
251 Mercer Street\\ New York, NY 10012, USA.}
\email{reza@cims.nyu.edu}

\author{Aukosh Jagannath}
\address{A.\ Jagannath\hfill\break
Department of Mathematics\\ University of Toronto\\
40 St George Street\\ Toronto, ON, Canada}
\email{aukosh@cims.nyu.edu}

\maketitle
\vspace{-.5cm}
\begin{abstract}
We consider the time to equilibrium for the Langevin dynamics of the
spherical $p$-spin glass model of system size $N$. We show that
the log-Sobolev constant and spectral gap are order $1$ in $N$
at sufficiently high temperatures whereas the spectral gap decays exponentially
in $N$ at sufficiently low temperatures. These verify the existence
of a dynamical high temperature phase and a dynamical glass phase at the level of the spectral gap. Key to these results are the understanding of
the extremal process and restricted free energy of Subag--Zeitouni
and Subag.
\end{abstract}
\smallskip

\section{Introduction}

In the study of glassy systems such as spin glasses and structural
glasses \cite{BerBir11,CriLeu04,MPV87} and constraint satisfaction
problems \cite{DSS15,MMZ01,MezMont09,MPV87}, one of the fundamental
objects of study is the time to relax to equilibrium. It is believed
that natural dynamics for such systems undergo what is called a \emph{glass
transition} but the nature of such a transition is still unresolved in condensed
matter physics \cite{BerBir11,DebStil01}. At high temperature, one
expects the system to reach equilibrium quickly as it is in
a classical phase, e.g., paramagnetic. At low temperature, however,
when the system is in a dynamical glassy phase, the equilibration
time is expected to be far longer than observable timescales \cite{BerBir11}.
It is desirable to have a mathematically rigorous understanding of
how these timescales to equilibrium change with temperature in well-studied
models. In this paper, we rigorously study the timescales to equilibrium
for an archetypal glassy model, namely the \emph{spherical p-spin
glass model}, defined as follows.

The state space for the spherical $p$-spin glass is the $(N-1)$-sphere
in dimension $N$ of radius $\sqrt{N}$, 
\[
\cS^{N}=S^{N-1}(\sqrt{N})=\bigg\{ \sigma=(\sigma_{1},...,\sigma_{N})\in\mathbb{R}^{N}:\sum_{i=1}^{N}\sigma_{i}^{2}=N\bigg\} \,,
\]
equipped with the induced metric $g$. For $p\geq3$, define the\emph{
$p$-spin Hamiltonian} by, 
\begin{equation}
H_{N,p}(\sigma)=\frac{1}{N^{(p-1)/2}}\sum_{i_{1},\ldots,i_{p}=1}^{N}J_{i_{1}\ldots i_{p}}\sigma_{i_{1}}\ldots\sigma_{i_{p}}\,,\label{eq:p-spin}
\end{equation}
where $J_{i_{1},\ldots,i_{p}}$ are i.i.d. standard Gaussian random
variables. Throughout this paper we will drop the subscripts $p$
and $N$ when it is unambiguous. Corresponding to $H$, define the
\emph{Gibbs measure,} $\pi_{N}$, at inverse temperature $\beta>0$
by 
\[
d\pi_{N}(\sigma)=\frac{e^{-\beta H}}{Z}dV(\sigma)\,,
\]
here $dV$ is the normalized volume measure, and $Z$ is chosen
so that $\pi_{N}$ is a probability measure. Define the \emph{Langevin
dynamics }as the heat flow $$P_{t}=e^{t\mathcal{L}_{N}}$$ generated
by the operator, 
\begin{equation}
\mathcal{L}_{N}=\frac{1}{2}\left(\Delta-\beta g(\nabla H_{N},\nabla\cdot)\right)\,,\label{eq:generator}
\end{equation}
where $\nabla$ is the covariant derivative, and $\Delta$ is the
corresponding Laplacian. In more probabilistic terms, $\mathcal{L}_{N}$
is the infinitesimal generator of a reversible Markov process whose
invariant measure is $\pi_{N}$. (For a quick review of the properties
of $\cL$ see \prettyref{sec:Spectral-Gap-inequalities}.)

One of the defining features of spin glasses is the complexity of
their energy landscape: they generally have exponentially many critical
points that are separated by energy barriers of height diverging linearly
in $N$. Although this complexity leads to rich phenomenological
behavior, it is also at the heart of the difficulty of analyzing these
systems. Indeed, even making this picture rigorous is a difficult
problem. In our setting, it has been established rigorously
in \cite{ABA13,ABC13} for all $p\geq3$. 

Dynamically, the models are expected to have the following rich
behavior that is a hallmark of dynamics for glassy systems. At small
$\beta$, they are expected to be in the \emph{high temperature phase}
where $P_{t}$ behaves similarly to the heat semigroup for the Laplacian on $\cS^{N}$.
For large $\beta$, however, this comparison breaks down and the system
enters \emph{the glass phase}. Here it is believed that $P_{t}$ exhibits
exponentially slow in $N$ relaxation to equilibrium and \emph{aging}
(see the literature review below). A natural question, and the aim
of this paper, is to make the relaxation picture rigorous.

A canonical way to analyze this from the point of view of Markov processes
is through the analysis of the \emph{spectral gap}, that is, the first
nontrivial eigenvalue, called $\lambda_{1}$, of $-\cL$, which governs
the time to equilibrium (see Subsection 1.1). Here the goal is to
analyze the asymptotics of $\lambda_{1}$ in $N$ as we vary $\beta$.
From this framework the above expectation is natural as one expects
\emph{metastable }behavior leading to poor mixing due to the large
energy barriers at low temperature (see e.g., Arrhenius's law).
In the non-disordered setting, there is a vast and growing literature
following this approach: central to this field is the differentiation
of high and low temperature phases where the dynamics moves from
an order $1$ gap to an exponentially decaying gap. This phenomenon
has been observed in lattice systems such as the 2D Ising model (see
e.g., \cite{AiHo,DoSh85,Ho85,MaOl94,StrZeg92Dob}), and
in mean field models including the Curie-Weiss model \cite{BovDen2015,GWL,LLP10}.

The study of the spectral gap for natural spin glass dynamics has
a much more limited history, though similar transitions are expected. 
For the ``simplest'' mean-field model of spin glasses, the random
energy model (REM), it was found that there is only one dynamical phase
in the natural local dynamics \cite{FIKP98}. For models on the hypercube, there is an exponential lower bound on the spectral gap in terms of an intrinsic quantity~\cite{Mathieu}.
In the short range setting, there are some results from e.g.,~\cite{GuZe96,Des02}.
However, for the
classical mean-field models of the $p$-spin models
on $\{\pm1\}^{N}$ and $\mathcal{S}^{N}$, the study of the spectral gap of Glauber/Langevin dynamics has remained largely open.

In the mean-field spin glass dynamics literature, a different approach
has been utilized to analyze off-equilibrium dynamics of the system.
The aim here is to establish a set of equations for the evolution
of certain observables in the large $N$ limit---called the \emph{Cugliandolo-Kurchan
equations}---and observe a transition in the large $t$ behavior
as one varies $\beta$ (see \cite{CugKur93}). At low temperatures,
this leads to the development of the theory of \emph{aging}. The Cugliandolo--Kurchan equations were proven by Ben Arous, Dembo and Guionnet \cite{BADG01,BADG06} for a ``soft''
relaxation of spherical $p$-spin glass dynamics; furthermore, in the case $p=2$ this led to a proof of aging~\cite{BADG01}.  
At high
temperature the same problem was studied as the relaxation goes to
zero in \cite{DGM07}, and similar analyses were undertaken in the
study of related models in \cite{BAG1,BAG2}. Such studies of off-equilibrium
dynamics are restricted to time scales shorter than the relaxation
time of the dynamics. 
Aging has also been extensively studied in related settings on the hypercube. In the REM, aging was established for the random hopping time dynamics, a randomly trapped random walk, in~\cite{ BABoGa2,BABoGa}, in a local Glauber-type dynamics~\cite{MaMo15}, and more recently Metropolis dynamics~\cite{CernyWassmer,Gayr16}.  For the $p$-spin model on $\{\pm 1\}^N$, aging was studied, again for the random hopping time dynamics, in~\cite{BABoCe,BAGu,BoGa, BoGaSv}.

In this paper, we demonstrate, for the relaxation time, the existence
of a dynamical high temperature and dynamical low temperature glass
phase in the setting of Langevin dynamics for spherical $p$-spin glasses. In particular,
we show that the spectral gap of $-\cL_{N}$, has order $1$ asymptotics
in $N$ for $\beta$ small and exponentially decaying in $N$ asymptotics for
$\beta$ large.

\subsection{Statement of Main Results.}

The goal of this paper is to study the behavior of the spectral gap
of the infinitesimal generator, $\cL$ defined in \prettyref{eq:generator},
of the Langevin dynamics for the $p\geq3$ spherical spin glass model.
Observe that $-\cL$ is a non-negative essentially self-adjoint operator on
$C^{\infty}(\cS^{N})\subset L^{2}(dV)$ and has pure point spectrum
$0=\lambda_{0}\leq\lambda_{1}\leq\lambda_{2}\leq\ldots$ ; we point
the reader to \prettyref{sec:Spectral-Gap-inequalities} for a brief
sketch of these facts. 

The asymptotic rate of growth of $\lambda_{1}$ in $N$ is of particular
interest, as $\lambda_{1}^{-1}$, called the \emph{relaxation time,}
is a measure of the time to equilibrium in an $L^{2}$ sense. Our
main result is to show that for all $p\geq3$ , the spectral gap of
the pure spherical $p-$spin model dynamics is in a dynamical high
temperature phase for small $\beta$ and is in a dynamical glass phase
for large $\beta$, suggesting the existence of a \emph{dynamical
glass transition} for the relaxation time:
\begin{thm}
\label{thm:spectral gap}For any $p\geq3$, consider the Langevin
dynamics of the pure spherical $p$-spin glass model at inverse temperature
$\beta>0$ with generator $\mathcal{L}$. 
\begin{enumerate}
\item There exists $0<\beta_{l}(p)<\infty$ and constants $c_{1}(p,\beta),c_{2}(p,\beta)>0$ such that for all $\beta>\beta_{l}$,
\[
\lim_{N\to\infty}\mathbb{P}(c_{1}<-\frac{1}{N}\log\lambda_{1}<c_{2})=1\,.
\]

\item There exists a $\beta_{h}(p)>0$ and a constant $c_3 (p,\beta)>0$ such that for all $\beta<\beta_{h}$,
\begin{align*}
\lim_{N\to\infty}\mathbb{P}(\lambda_{1}>c_{3}) & =1\,.
\end{align*}

\end{enumerate}
\end{thm}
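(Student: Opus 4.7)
The plan is to handle the two temperature regimes separately via the Rayleigh--Ritz characterization $\lambda_1 = \inf_f \cE(f,f)/\mathrm{Var}_{\pi_N}(f)$, where $\cE(f,f) = \tfrac{1}{2}\int|\nabla f|^2\,d\pi_N$.

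\emph{Low-temperature regime (Part 1).} I would prove matching exponential upper and lower bounds on $\lambda_1$. For the lower bound $\lambda_1 \geq e^{-c_2 N}$, I would invoke the Holley--Stroock perturbation principle relative to the uniform measure $dV$ on $\cS^N$: since $dV$ satisfies the Lichnerowicz spectral gap $(N-1)/N \geq 1/2$, writing $d\pi_N/dV \propto e^{-\beta H}$ gives $\lambda_1 \geq \tfrac{1}{2} e^{-2\beta\,\mathrm{osc}(H)}$, and Borell--TIS concentration guarantees $\mathrm{osc}(H) \leq CN$ with probability $1-o(1)$. For the upper bound $\lambda_1 \leq e^{-c_1 N}$, I would build a metastability-type test function using the Subag--Zeitouni extremal process and Subag's restricted free-energy theorem. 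For $\beta > \beta_l$, these results describe $\pi_N$ as asymptotically localized on disjoint spherical bands $B_k$ around near-ground-state critical points, with at least two such bands $B_1, B_2$ carrying $\pi_N$-mass $\Omega(1)$ and any thin annulus $N_\epsilon(\partial B_1)$ of width $\epsilon$ having $\pi_N$-mass bounded by $e^{-c'N}$. Setting $f$ to be a Lipschitz cutoff equal to $1$ on $B_1$ and $0$ outside $N_\epsilon(B_1)$ then gives $\mathrm{Var}_{\pi_N}(f) = \Omega(1)$ (since $B_2 \subset \{f=0\}$) while $\cE(f,f) \leq C\epsilon^{-2}\pi_N(N_\epsilon) \leq e^{-c''N}$.

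\emph{High-temperature regime (Part 2).} Here the goal is a log-Sobolev inequality with $N$-independent constant, which implies the desired $\lambda_1 \geq c_3$. The natural route is the Bakry--Emery curvature-dimension criterion: for $\cL = \tfrac12\Delta - \tfrac\beta2 g(\nabla H,\nabla\cdot)$ one needs
\[
\mathrm{Ric}_{\cS^N} + \beta\,\mathrm{Hess}_{\cS^N}H \succeq K\,g\qquad\text{uniformly on } \cS^N
\]
with $K>0$. On the $\sqrt{N}$-sphere, $\mathrm{Ric}_{\cS^N} = \tfrac{N-2}{N}g$, and Euler's identity for the $p$-homogeneous $H$ gives
\[
\mathrm{Hess}_{\cS^N}H(\sigma)(X,Y) = D^2H(\sigma)(X,Y) - \tfrac{pH(\sigma)}{N}\langle X,Y\rangle,\qquad X,Y\in T_\sigma\cS^N.
\]
For each fixed $\sigma$, $D^2H(\sigma)$ expressed in an orthonormal tangent basis is a Wigner-type symmetric matrix whose entries have variance $O(1/N)$, so its operator norm is $O(1)$; the correction term is controlled by $\sup|H|/N = O(1)$ with high probability. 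Choosing $\beta_h$ small enough that $\tfrac{N-2}{N} - \beta_h \cdot O(1) \geq \tfrac14$ then yields $K\geq 1/4$ w.h.p.\ and closes the argument.

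The principal technical obstacle in Part (2) is upgrading the \emph{pointwise} Hessian bound to a \emph{uniform} bound $\sup_{\sigma\in\cS^N}\|\mathrm{Hess}_{\cS^N}H(\sigma)\|_{\mathrm{op}} = O(1)$ with high probability: an $\epsilon$-net of $\cS^N$ has cardinality exponential in $N$, whereas the best one-point operator-norm tail scales like $\mathbb{P}(\|\cdot\|_\mathrm{op}>t)\leq e^{-cNt}$, so a crude union bound is inadequate. Overcoming this requires Gaussian-process chaining along the lines of the Kac--Rice / landscape-complexity calculations of~\cite{ABA13,ABC13}, or replacing Bakry--Emery by a localization / two-scale entropy decomposition able to absorb atypical Hessian regions as a small defect term. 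By comparison, the low-temperature half is conceptually cleaner once the Subag--Zeitouni well structure is invoked; the main calibration there is choosing the cutoff scale $\epsilon$ small enough that $N_\epsilon$ lies in the regime controlled by the restricted free-energy formula but large enough that $|\nabla f|^2 = O(\epsilon^{-2})$ remains subexponential.
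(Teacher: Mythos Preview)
Your overall strategy matches the paper's proof: Holley--Stroock stability for the low-temperature lower bound, a smooth cutoff test function built from Subag's restricted free-energy picture for the low-temperature upper bound, and the Bakry--\'Emery criterion for the high-temperature log-Sobolev inequality.

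The one substantive divergence is in the resolution of what you correctly flag as the principal obstacle in Part~(2), the uniform bound $\sup_{\sigma\in\cS^N}\|Hess\,H(\sigma)\|_{\mathrm{op}}=O(1)$. The paper does \emph{not} use chaining, Kac--Rice, or any localization/defect argument. Instead it applies the Sudakov--Fernique inequality directly to the Gaussian process $\psi(\sigma,v)=Hess\,H(\sigma)(v,v)$ indexed by $(\sigma,v)\in\cS^N\times S^{N-1}(1)$, comparing it to a \emph{decoupled} process of the form
\[
\phi(\sigma,v)\;=\;c_1\,N^{-1}H_{N,p-2}(\sigma)\;+\;c_2\,N^{-1/2}\sum_{l,m}J''_{lm}v_lv_m\;+\;c_3\,N^{-1}H_{N,p}(\sigma).
\]
One checks $\E(\psi-\psi')^2\lesssim_p\E(\phi-\phi')^2$, so Sudakov--Fernique bounds $\E\sup\psi$ by $\E\sup\phi$, and each summand of $\phi$ is a rescaled $k$-spin Hamiltonian whose expected maximum is $O(N)$ by the standard Dudley bound, giving $\E[\sup\psi]=O(1)$. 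Borell's inequality then supplies the concentration. This sidesteps the union-bound blowup you were concerned about, since Sudakov--Fernique compares expected suprema over the whole index set in one stroke; it is both simpler and sharper than generic chaining.

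A minor calibration in Part~(1): the paper does not assert that two bands each carry $\Omega(1)$ Gibbs mass. It instead takes the three lowest local minima, notes that their $q_{**}$-caps are disjoint (by Subag's overlap estimates), and pigeonholes to find one cap whose complement has $\pi$-mass at least $1/2$; the cutoff test function is built around that cap. This is what guarantees $\mathrm{Var}_\pi f=\Omega(1)$ without needing the (stronger and not directly available) statement that individual bands have uniformly positive mass.
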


\begin{rem}
It is worth noting here that in the above, (1) holds for all $\beta$ larger than the $\beta_l$ necessary for the results of~\cite{SubGibbs16} to hold; in particular, that picture is expected to hold up to the static phase transition point $\beta_s$. Precise information about the relation between the constants $c_1,c_2$ in (1) and their dependence on $\beta$ can be gleaned from the proofs, though the two do not match. 
\end{rem}

At the heart of the proof of item (1) are the
recent results regarding the energy landscape, $H$, and
the Gibbs measure, $\pi$, developed in a series of papers by Auffinger-Ben
Arous-Cerny \cite{ABC13}, Auffinger-Ben Arous \cite{ABA13}, Subag-Zeitouni
\cite{SubZeit16}, and Subag \cite{Sub15,SubGibbs16}. In particular,
the proof of part (1) of \prettyref{thm:spectral gap} relies on the
restricted free estimates obtained by Subag \cite{SubGibbs16}
(see \prettyref{prop:(Subag)-band} below) in the recent study of
the geometry of the Gibbs measure in spherical $p$-spin models. 

The proof of item (2) follows from the following stronger result, namely
that at high temperature, $\pi$ admits a logarithmic Sobolev (log-Sobolev)
inequality (see~\eqref{eq:log-sobolev-def}).
\begin{prop}
\label{prop:log-sob-ht}There exists a $\beta_{h}(p)>0$ and a constant $c_L(p,\beta)>0$ such that
for all $\beta<\beta_{h}$, $\pi$ admits a log-Sobolev inequality with constant $c_{L}$
with probability $1-O(e^{-cN})$ for some $c>0$.
\end{prop}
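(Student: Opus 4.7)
The plan is to establish the log-Sobolev inequality via the Bakry-Émery curvature-dimension criterion. Recall the standard principle: on a compact Riemannian manifold $(M,g)$, if $d\pi \propto e^{-V} dV$ satisfies
\[
\mathrm{Ric}_M + \nabla^2 V \geq \rho\, g \quad \text{for some } \rho > 0,
\]
then $\pi$ satisfies a log-Sobolev inequality with constant depending only on $\rho$. In our setting $V = \beta H$, so the goal is to produce a uniform in $\sigma$ lower bound on $\mathrm{Ric}_{\cS^N} + \beta \nabla^2 H$, holding with probability $1 - O(e^{-cN})$.

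The sphere $\cS^N = S^{N-1}(\sqrt{N})$ has constant Ricci curvature $(N-2)/N \cdot g$, so $\mathrm{Ric}_{\cS^N} \geq (1/2)g$ for all $N$ sufficiently large. The proposition will therefore follow once one establishes a uniform operator-norm bound
\[
\sup_{\sigma \in \cS^N} \|\nabla^2 H(\sigma)\|_{op} \leq C(p)
\]
with probability $1 - e^{-cN}$: choosing $\beta_h = (4C)^{-1}$ then yields $\mathrm{Ric} + \beta \nabla^2 H \geq (1/4)g$ for every $\beta < \beta_h$, which is enough.

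For the Hessian bound, I would treat the covariant Hessian by first using Euler's identity $\sigma \cdot \nabla_{\R^N} H = p H$ for the degree-$p$ homogeneous polynomial $H$ to decompose it as the tangential projection of $\nabla_{\R^N}^2 H(\sigma)$ minus a radial correction $(pH(\sigma)/N) P_\sigma$. For the radial piece, Borell-TIS applied to the Gaussian field $H$ gives $\sup_\sigma |H(\sigma)|/N = O(1)$ with probability $1 - e^{-cN}$. For the Euclidean-Hessian piece, I would view $(\sigma, u) \mapsto u^T \nabla_{\R^N}^2 H(\sigma) u$ as a centered Gaussian process on the $\sim 2N$-dimensional index set $\{(\sigma, u) : \sigma \in \cS^N,\ u \in T_\sigma \cS^N, \abs{u}=1\}$. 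A direct calculation using \eqref{eq:p-spin} gives pointwise variance $O(1/N)$, so the intrinsic Gaussian metric is $O(1/\sqrt{N})$ times the natural Euclidean metric on the index set. The Dudley entropy integral then reads
\[
\int_0^{O(1/\sqrt{N})} \sqrt{N \log \bigl(1/(\e\sqrt{N})\bigr)}\, d\e = O(1),
\]
so $\E[\sup u^T \nabla_{\R^N}^2 H(\sigma) u] = O(1)$, and Borell-TIS concentration at variance $O(1/N)$ upgrades this to $\sup_{\sigma,u} u^T \nabla_{\R^N}^2 H(\sigma) u \leq C(p)$ with probability $1 - e^{-cN}$.

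The main obstacle is this uniform Hessian estimate. The Bakry-Émery implication, the Ricci computation on the round sphere, and the Euler decomposition are all routine; the technical content lies in the Gaussian-process bound, where one must delicately balance the $\sim 2N$ dimensionality of the joint $(\sigma, u)$ index set against the $\sim N^{-1/2}$ smallness of the intrinsic metric so that the Dudley integral is $O(1)$ rather than growing with $N$. Once that bound is established, the Bakry-Émery criterion delivers a log-Sobolev inequality with $N$-independent constant $c_L(p,\beta)$, completing the proposition.
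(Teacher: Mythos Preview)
Your overall strategy is exactly the paper's: apply the Bakry--\'Emery criterion (\prettyref{prop:(Curvature-Energy-Balance-Lemma)}) using $\mathrm{Ric}_{\cS^N}=(1-o(1))g$, reduce to a uniform bound on the covariant Hessian of $H$, split that Hessian via Euler's identity into the Euclidean piece and the $(p/N)H(\sigma)$ correction, and finish with Borell at variance $O(1/N)$. The one substantive difference is in how you bound the expected supremum of the Hessian process $\psi(\sigma,v)=v^{T}\nabla_{\R^N}^2 H(\sigma)v-(p/N)H(\sigma)$. You propose Dudley's entropy integral directly on the $\sim 2N$-dimensional index set $\{(\sigma,v)\}$; the paper instead (\prettyref{lem:spectral-radius-bound}) applies the Sudakov--Fernique inequality to dominate $\psi$ by a \emph{decoupled} comparison process $\phi(\sigma,v)$ built from independent $H_{N,p-2}(\sigma)$, $H_{N,2}(v)$, and $H_{N,p}(\sigma)$ pieces, thereby reducing immediately to the known $O(N)$ bounds on $\max H_{N,k}$ from \prettyref{lem:(Auffinger-Ben-Arous)}. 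The Sudakov--Fernique route is a bit slicker because it recycles those maxima bounds and sidesteps any covering-number computation; your Dudley route is more self-contained but requires one extra care you glossed over: the $\sigma$-increments of $\psi$ scale like $N^{-1}|\sigma-\sigma'|$ (not $N^{-1/2}$), so the statement ``intrinsic metric is $O(N^{-1/2})$ times Euclidean'' only holds after first rescaling $\sigma$ to the unit sphere, at which point your entropy integral does indeed come out $O(1)$.
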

\begin{rem}
The proof of \prettyref{prop:log-sob-ht} and therefore item (2) of Theorem~\ref{thm:spectral gap} also goes through for mixed $p$-spin glasses on $\mathcal S^N$.
\end{rem}
This result does not follow by a tensorization argument as is common
for short-range spin systems because $H$ is non-local and $\cS^{N}$
is not a product space. Instead it follows by curvature dimension
arguments after proving that the Hessian of the Hamiltonian is on
the same order of magnitude as the Ricci tensor, uniformly over $\mathcal S^N$; this follows by Gaussian comparison techniques.

Aside from its inherent interest, this also yields the following geometric
analytic interpretation of \prettyref{thm:spectral gap}. For $\beta$
small, the curvature dimension of the system is positive and order
$1$, so that the effective geometry admits a comparison to Gaussian/spherical
space. At low temperature, however, the energetic effects dominate
and thus this comparison breaks down. One is then in a regime where
the time to equilibrium is governed by passing between energy barriers.

\begin{rem}
\label{rem:p=2}The definition of $H_{N,p}$ extends naturally
to $p=2,$ sometimes called the spherical Sherrington-Kirkpatrick model; we omit this case for the
following reason. In contrast to all $p\geq3$, the $p=2$ Hamiltonian
has exactly $N$ critical points, yielding a very different structure
to the energy landscape. The absence of exponentially many metastable
states, a signature of the glassy phase, makes the $p=2$ case less
pertinent to the scope of this paper.
\end{rem}

\subsection*{Phase Boundaries in $\beta$. }

In light of the main theorem, it is natural to define the following
two inverse temperatures. Let
\begin{align*}
\beta_{para}= & \sup\left\{ \beta>0:\lim_{N\to\infty}\mathbb{P}(\lambda_{1}\asymp1)=1\right\} \\
\beta_{dyn}= & \inf\left\{ \beta>0:\lim_{N\to\infty}\mathbb{P}(-\frac{1}{N}\log\lambda_{1}\asymp1)=1\right\} 
\end{align*}
where $f(N)\asymp1$ is to say there exist, $c,C>0$ depending on $p$ and $\beta$ such that $c<f(N)<C$. These correspond
to the thresholds for the dynamical high temperature and glassy phases,
as discussed in the introduction. Evidently $\beta_{h}\leq\beta_{para}$
and $\beta_{dyn}\leq\beta_{l}.$ We are led to the following
question:
\begin{question*}
Is $\beta_{dyn}=\beta_{para}$? 
\end{question*}
\noindent We expect that the equality is true, though we believe our method
for part (1) of the theorem can only be extended to $\beta\geq\beta_{s}$
(where $\beta_{s}$ is the static transition temperature obtained
in \cite{TalSphPF06}), because it relies heavily on information about
the equilibrium measure in the static low temperature regime. 

It is also natural to ask the question of whether the dynamical glass
phase and the static low temperature (glass) phases are in fact distinct.
\begin{question*}
Is $\beta_{dyn}<\beta_{s}$?
\end{question*}
\noindent The answer to this question is expected to be yes \cite{CastCav05,CugKur93}. 

Bearing in mind the results of \cite{BADG01,BADG06} where they define
a critical temperature for the aging phenomena, $\beta_{aging}$ for
a relaxation of the spherical $p$-spin model, it would also be interesting
to prove the existence of aging for large but finite $N$ in the spherical
$p$-spin glass and determine the relation between $\beta_{aging}$,
and the static and dynamical critical temperatures, $\beta_{s}$ and
$\beta_{dyn}$.

\subsection*{Acknowledgements}
The authors thank the anonymous referee for helpful comments and suggestions. 
R.G. would like to thank Eyal Lubetzky and Charles Newman for their support. A.J. would like to thank Dmitry Panchenko
and G\'erard Ben Arous for helpful discussions. This research was conducted
while R.G. was supported by NSF DMS-1207678 and while A.J. was supported
by NSF OISE-1604232.

\section{Preliminaries \label{sec:Preliminaries}}

In this section, we discuss basic properties of the energy landscape,
$H_{N}$. We will prove an important regularity estimate regarding
the operator norm of the Hessian of $H_{N}$ to show that it is uniformly (over $\mathcal S^N$) order one. In particular, this regularity estimate (Lemma~\ref{lem:spectral-radius-bound}) will be the crux of the proof of item (2) of Theorem~\ref{thm:spectral gap}.

We will then proceed recall
results and notation from \cite{ABA13,ABC13,SubGibbs16,SubZeit16}
that will be important to the proofs of item (1) of Theorem~\ref{thm:spectral gap}.

\subsubsection*{Notation\label{sub:Notation}}

In the following we drop the subscripts $p,N$ whenever it is unambiguous,
and we extend the definition of $H_{N,p}$ to $p=1,2$ in the natural
way, when necessary. We say that $f(N)\lesssim_{a}g(N)$ if there
is a constant $C(a)$ that depends only on $a$ such that $f\leq Cg$
for all $N$. Whenever we use the notation $o(1)$, we mean by $f(N)=o(g(N))$ that $f(N)/g(N) \to 0$ as $N\to\infty$. 

For a probability measure $\mu$ let $L_{\mu}^{2}$
denote the space of functions that are square integrable with respect
to $\mu$. Let $C^{\infty}(M)$ be the space of smooth functions on
a Riemannian manifold $M$. The notation $\nabla$ will always refer
to a covariant derivative and $\Delta$ the corresponding Laplacian.

Throughout the paper, let $R(\sigma,\sigma')$ be the normalized spin overlap: for $\sigma,\sigma'\in \mathcal S^N$,
\[
R(\sigma, \sigma') = \frac 1N \sum_i \sigma_i \sigma'_i\,.
\]
Notice that $\mathbb E[H_{N,p}(\sigma) H_{N,p}(\sigma')] =N R(\sigma,\sigma')^p$.

\subsection{Regularity of $H$.}
Before proving the uniform bound on the Hessian of $H$, we remind the reader that the maximum and minimum of the process $H$ are order $N$. \begin{lem}
\label{lem:(Auffinger-Ben-Arous)} For every $p\geq 1$, there exists $E(p)>0$ and $c(p)>0$, such that for every
$\delta>0$, 
\[
\mathbb{P}\left(\max_{\sigma\in\mathcal{S}^{N}}H(\sigma)-NE\geq N\delta\right)\lesssim e^{-cN\delta^2}.
\]
In particular, for every $p\geq1$, we have, 
\begin{align*}
\mathbb{E}\left[\max_{\sigma\in\mathcal{S}^{N}}|H(\sigma)|\right]\lesssim_{p} N\,.
\end{align*}
\end{lem}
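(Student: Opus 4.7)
The plan is to recognize $H_{N,p}$ as a centered Gaussian field on $\mathcal S^N$ and combine Gaussian concentration with the known asymptotics of its ground state energy. Concretely, the covariance $\mathbb E[H(\sigma)H(\sigma')] = N R(\sigma,\sigma')^p$ gives uniform variance $\mathbb E H(\sigma)^2 = N$ on $\mathcal S^N$. The Borell-TIS inequality therefore yields, for every $t>0$,
\[
\mathbb P\bigl(\max_{\sigma\in \mathcal S^N} H(\sigma) - \mathbb E[\max_{\sigma\in \mathcal S^N} H(\sigma)] \geq t \bigr) \leq \exp\bigl(-\tfrac{t^2}{2N}\bigr).
\]
Choosing $t = N\delta/2$ will produce the $e^{-cN\delta^2}$ tail stated in the lemma, once the expectation is controlled.

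Next I would invoke the Auffinger--Ben Arous ground state computation (the precise content of \cite{ABA13}), which identifies a finite positive constant $E(p)$ such that
\[
\frac{1}{N}\,\mathbb E\bigl[\max_{\sigma\in\mathcal S^N} H(\sigma)\bigr] \xrightarrow[N\to\infty]{} E(p).
\]
In particular, for all $N$ sufficiently large depending on $\delta$ (and trivially for small $N$ after adjusting constants), $\mathbb E[\max H] \leq NE + N\delta/2$. Inserting this into the Borell-TIS bound above and adjusting the constants absorbs the small-$N$ cases, giving exactly the claimed $\mathbb P(\max H - NE \geq N\delta) \lesssim e^{-cN\delta^2}$ with $c=c(p)$.

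For the second conclusion, a symmetric Borell-TIS bound applied to $-H$ (whose distribution is the same as $H$) yields the matching lower tail, so $\mathbb E[\,|\max H|\,] \lesssim_p N$ follows by integrating the tails against the mean estimate $\mathbb E[\max H] \asymp N$. The only nontrivial ingredient here is the convergence of $\mathbb E[\max H]/N$ to a finite limit; Borell-TIS and the $L^2$ structure of $H$ are standard. Thus I anticipate no genuine obstacle beyond quoting \cite{ABA13} for the value of $E(p)$ and applying the concentration inequality.
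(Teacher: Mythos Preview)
Your approach is correct in spirit but differs from the paper's in one key respect: the paper obtains the bound $\mathbb E[\max_{\mathcal S^N} H] \lesssim_p N$ directly from Dudley's entropy integral (using the metric entropy of $\mathcal S^N$ under the canonical distance $d(\sigma,\sigma')^2 = 2N(1-R(\sigma,\sigma')^p)$), and only then applies Borell's inequality. It does not invoke the sharp ground-state limit from \cite{ABA13,ABC13}; that limit is mentioned only in the subsequent remark as supplementary information about the precise constant $E_0$. The advantage of the paper's route is that it is entirely self-contained and elementary, whereas you are importing a substantially deeper result to establish what is only an $O(N)$ upper bound on the mean.

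There is also a small gap in your argument as written. You say that for $N$ sufficiently large \emph{depending on $\delta$} one has $\mathbb E[\max H]\le NE + N\delta/2$, and then absorb small $N$ into the implied constant. But the lemma requires the bound ``for every $\delta>0$'' with an implied constant independent of $\delta$; your absorption step would make that constant $\delta$-dependent. The fix is trivial: rather than chasing the sharp $E(p)$, take $E' := \sup_N N^{-1}\mathbb E[\max H]$, which is finite by the convergence you quote (or, more simply, by Dudley). Then $\mathbb E[\max H]\le NE'$ for all $N$, and Borell gives $\mathbb P(\max H - NE' \ge N\delta)\le e^{-N\delta^2/2}$ uniformly. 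Finally, note that the second claim concerns $\max_\sigma |H(\sigma)|$, not $|\max_\sigma H(\sigma)|$; since $-H\eqdist H$ one has $\mathbb E[\max|H|]\le 2\,\mathbb E[\max H]\lesssim_p N$, which is what you intended.
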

The proof of the bound on $\mathbb E[\max_{\cS^N} H]$ (and by symmetry also $\mathbb E[\max_{\cS^N} |H|]$)   in Lemma~\ref{lem:(Auffinger-Ben-Arous)} is a classical application of Dudley's entropy integral; the tail estimate above then follows immediately from Borell's inequality~\cite{Led01}.

\begin{rem}\label{rem:explicit-E_0}
The precise constant, call it $E_0(p)$, such that $\mathbb E[\min H]= -E_0 N +o(N)$ was identified by Auffinger, Ben-Arous and Cerny~\cite{ABC13} (see also~\cite{Sub15}).
Namely, in \cite[Theorem 2.12]{ABC13}, it is stated for $p$ even
as, at the time, the free energy had only been computed for those
$p$'s rigorously. This has been done now by \cite{Chen13} for all
$p\geq3$ so the proof of \cite[Theorem 2.12]{ABC13} holds for all
$p\geq3$. For
$p=2$, the estimate comes from the top eigenvalue of a GOE matrix
\cite{AGZ10}. 
\end{rem}

We now turn to the estimate regarding the Hessian of $H$, central to the proof of item (2) of Theorem~\ref{thm:spectral gap}. In the following,
for $f\in C^{2}$, we let $Hess(f(\sigma))$ denote the covariant
Hessian of $f$ with respect to $\cS^{N}$ at the point $\sigma$,
and $Hess_{E}$ denote the usual Euclidean Hessian on $\R^{N}$. Recall
that the tangent space to $\cS^{N}$ at a point $\sigma$ can then
be thought of as the vector space $\{x\in\R^{N}:(x,\sigma)_{E}=0\}$
where by $(\cdot,\cdot)_{E}$ we mean the usual Euclidean inner product.
With this in mind, for $f\in C^{2}(\R^{N})$ we have that at any point
$\sigma$, 
\begin{equation}
Hess(f(\sigma))=Hess_{E}(f(\sigma))-\frac{1}{N}(\sigma,\nabla_{E}f(\sigma))_{E}Id\label{eq:spherical-euclidean-hessian}
\end{equation}
where $\nabla_{E}$ is the Euclidean gradient, and $(\cdot,\cdot)_{E}$
is the usual Euclidean inner product in $\mathbb{R}^{N}$, and $Id$
is the identity operator on $T_{\sigma} \cS^N$. Define now the quantities
\[
\overline{r}(H)=\sup_{\sigma\in\cS^{N}}\sup_{\substack{v\in T_{\sigma}\cS^{N}\\
g(v,v)=1
}
}Hess(H(\sigma))(v,v)
\]
and 
\[
\underline{r}(H)=\inf_{\sigma\in\cS^{N}}\inf_{\substack{v\in T_{\sigma}\cS^{N}\\
g(v,v)=1
}
}Hess(H(\sigma))(v,v).
\]
By separability of $T\cS^{N}$ and the continuity of $H$, these random
variables are measurable. Furthermore, by symmetry, 
\[
-\overline{r}(H)\eqdist\underline{r}(H).
\]
Finally, define $r(H)=\overline{r}-\underline{r}$. Observe that $r(H)$
bounds the spectral radius of $Hess(H(\sigma))$ uniformly over $\sigma\in\mathcal{S}^{N}$. 
\begin{lem}
\label{lem:spectral-radius-bound}For any $p\geq3$, we have that
\[
\E\left[r(H)\right]\lesssim_{p}1
\]
and there exists a $c(p)>0$ such that for all $\epsilon>0$, 
\[
\prob\left(\abs{r(H)-\E\left[r(H)\right]}>\epsilon\right)\lesssim e^{-cN\epsilon^{2}}\,.
\]
\end{lem}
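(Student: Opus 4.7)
The plan is to reduce everything to standard bounds on suprema of Gaussian processes via the decomposition
\[
Hess(H(\sigma))(v,v) = Hess_E H(\sigma)(v,v) - \frac{p}{N}H(\sigma),
\]
valid for unit $v \in T_\sigma \cS^N$. This follows by substituting Euler's identity $(\sigma, \nabla_E H(\sigma))_E = p H(\sigma)$, valid since $H$ is $p$-homogeneous, into~\eqref{eq:spherical-euclidean-hessian}. Hence
\[
r(H) \le 2 \sup_{\sigma, v} \abs{Hess_E H(\sigma)(v,v)} + \frac{2p}{N}\sup_\sigma \abs{H(\sigma)},
\]
and by Lemma~\ref{lem:(Auffinger-Ben-Arous)} the second term contributes $O_p(1)$ in expectation. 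It therefore suffices to show $\E\bigl[\sup_{\sigma,v}\abs{Hess_E H(\sigma)(v,v)}\bigr] \lesssim_p 1$.

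For this I would exploit the multilinearity of $(\sigma, v) \mapsto Hess_E H(\sigma)(v,v)$: it equals $N^{-(p-1)/2}$ times a sum of $p(p-1)$ contractions of the Gaussian tensor $J = (J_{i_1\ldots i_p})$ against $\sigma$ in $p-2$ slots and against $v$ in the remaining two. For any $y_1, \ldots, y_p \in \R^N$, multilinearity gives $\abs{J(y_1, \ldots, y_p)} \le \norm{y_1} \cdots \norm{y_p}\cdot \norm{J}_{\mathrm{inj}}$, where $\norm{J}_{\mathrm{inj}} := \sup_{\abs{x_i}=1} \abs{J(x_1, \ldots, x_p)}$. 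Applying this with $\norm{\sigma} = \sqrt N$ and $\norm{v} = 1$ yields
\[
\sup_{\sigma \in \cS^N,\, \abs{v}=1} \abs{Hess_E H(\sigma)(v,v)} \lesssim_p \frac{N^{(p-2)/2}}{N^{(p-1)/2}}\norm{J}_{\mathrm{inj}} = \frac{\norm{J}_{\mathrm{inj}}}{\sqrt N}.
\]
A standard $\epsilon$-net $\mathcal N$ on the $p$-fold product of unit spheres (with $\abs{\mathcal N} \le (C/\epsilon)^{pN}$), combined with the bootstrap $\norm{J}_{\mathrm{inj}} \le \sup_{\mathcal N^p}\abs{J} + p\epsilon \norm{J}_{\mathrm{inj}}$ at $\epsilon = 1/(2p)$, gives $\E[\norm{J}_{\mathrm{inj}}] \lesssim_p \sqrt N$ and hence the desired bound.

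For concentration, I would apply Gaussian concentration to $J \mapsto r(H)$. Each $J \mapsto Hess(H(\sigma))(v,v) = \langle a_{(\sigma,v)}, J\rangle$ is linear, with $\norm{a_{(\sigma,v)}}^2 = \mathrm{Var}(Hess(H(\sigma))(v,v))$. Differentiating $\E[H(\sigma)H(\sigma')] = N R(\sigma,\sigma')^p$ twice in $\sigma$ and twice in $\sigma'$, then setting $\sigma = \sigma'$ with unit tangent $v = v'$, shows this variance is $\lesssim_p 1/N$ uniformly in $(\sigma, v)$. Since $r(H)$ is a difference of suprema of such linear functionals, the triangle inequality gives that $r(H)$ is Lipschitz in $J$ with constant at most $2\sup_{(\sigma,v)}\norm{a_{(\sigma,v)}} \lesssim_p 1/\sqrt N$, and Borell's inequality yields $\prob(\abs{r(H) - \E[r(H)]} > \epsilon) \lesssim e^{-cN\epsilon^2}$ with $c = c(p) > 0$.

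The main obstacle is the bound $\E[\norm{J}_{\mathrm{inj}}] \lesssim_p \sqrt N$. A Dudley entropy integral using only the pointwise variance $\asymp 1/N$ on the parameter space of dimension $\asymp N$ yields an extra $\sqrt{\log N}$ factor. It is essential to use that the Gaussian process is a multilinear form, so that the error between a point and its nearest net representative is bounded in terms of $\norm{J}_{\mathrm{inj}}$ itself, permitting the self-referential bootstrap that absorbs the logarithm.
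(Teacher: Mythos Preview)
Your proof is correct and shares the paper's overall skeleton: the same Euler-identity decomposition of the spherical Hessian, the same variance bound $\mathrm{Var}\bigl(Hess(H(\sigma))(v,v)\bigr)\lesssim_p 1/N$, and the same appeal to Borell's inequality for concentration. The genuine difference is in how you control $\E\bigl[\sup_{\sigma,v} Hess_E H(\sigma)(v,v)\bigr]$. The paper introduces an auxiliary decoupled process
\[
\phi(\sigma,v)=\frac{p(p-1)}{N^{(p-1)/2}}\sum J'_{i_1\ldots i_{p-2}}\sigma_{i_1}\cdots\sigma_{i_{p-2}}+\frac{p(p-1)}{\sqrt N}\sum J''_{lm}v_lv_m-\frac{p}{N}H(\sigma),
\]
checks the increment comparison $\E(\psi-\psi')^2\lesssim_p\E(\phi-\phi')^2$, and invokes Sudakov--Fernique to reduce to three separate spin-glass maxima $H_{N,p-2}$, $H_{N,2}$, $H_{N,p}$, each handled by Lemma~\ref{lem:(Auffinger-Ben-Arous)}. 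You instead bound the quadratic form pointwise by the injective norm $\norm{J}_{\mathrm{inj}}$ of the underlying Gaussian tensor and control that via an $\epsilon$-net with the multilinear bootstrap $\norm{J}_{\mathrm{inj}}\le \sup_{\mathcal N}|J|+p\epsilon\norm{J}_{\mathrm{inj}}$. Your route is arguably more elementary---it needs only a union bound over a net rather than a comparison inequality---and makes the dependence on the tensor structure explicit; the paper's route is slicker in that it reduces everything back to the already-proved Lemma~\ref{lem:(Auffinger-Ben-Arous)} and stays within the Gaussian-comparison toolkit used elsewhere in the paper. Your final paragraph correctly identifies why the bootstrap is essential: a naive Dudley bound on the $(\sigma,v)$-indexed process would pick up a spurious $\sqrt{\log N}$.
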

\begin{proof}
By symmetry it suffices to prove the estimates for $\bar{r}(H)$.
We begin by proving the first estimate. To this end, observe that
$H$ can be extended to all of $\R^{N}$ by allowing $\sigma$ to
take values in $\mathbb{R}^{N}$ and using the same definition of
the Hamiltonian. Thus in the notation above, 
\begin{align*}
(\sigma,\nabla_{E}H(\sigma))_{E}= & pH(\sigma).
\end{align*}
Combining this with \prettyref{eq:spherical-euclidean-hessian} and
the fact that $H$ is smooth, we then see that for any $v\in S^{N-1}(1)\subset\mathbb{R}^{N}$,
we have 
\[
Hess(H_{N}(\sigma))(v,v)=\frac{p(p-1)}{N^{\frac{p-1}{2}}}\sum_{l,m,i_{1},...,i_{p-2}=1}^{N}J_{l,m,i_{1}\ldots,,i_{p-2}}\sigma_{i_{1}}\cdots\sigma_{i_{p-2}}v_{l}v_{m}-\frac{p}{N}H_{N}(\sigma)\norm{v}_{{2}}^{2}\,,
\]
when viewed as an operator on $T_{\sigma} \cS^N$. 

Define the $\cS^{N}\times S^{N-1}(1)$-indexed Gaussian process, $\psi(\sigma,v)$,
given by 
\[
\psi(\sigma,v)=\frac{p(p-1)}{N^{\frac{p-1}{2}}}\sum_{l,m,i_{1},...,i_{p-2}=1}^{N}J_{l,m,i_{1}\ldots,,i_{p-2}}\sigma_{i_{1}}\cdots\sigma_{i_{p-2}}v_{l}v_{m}-\frac{p}{N}H_{N}(\sigma)\,.
\]
As $\cS^{N}$ is given by induced metric, we have 
\[
\overline{r}=\sup_{\sigma\in\cS^{N}}\sup_{v\in S^{N-1}(1)\cap T_\sigma \cS^N}\psi\leq\sup_{\sigma\in\cS^{N}}\sup_{v\in S^{N-1}(1)}\psi.
\]
Define also the related process 
\[
\phi(\sigma,v)=\frac{p(p-1)}{N^{\frac{p-1}{2}}}\sum_{i_{1},...,i_{p-2}=1}^{N}J'_{i_{1}\ldots,,i_{p-2}}\sigma_{i_{1}}\cdots\sigma_{i_{p-2}}+\frac{p(p-1)}{\sqrt{N}}\sum_{l,m=1}^{N}J''_{lm}v_{l}v_{m}-\frac{p}{N}H_{N}(\sigma)\,,
\]
where $J'_{i_{1}\ldots i_{p-2}}$ and $J''_{lm}$ are independent standard
Gaussians. For any $\sigma,\sigma'\in\mathcal{S}^{N}$, $v,v'\in S^{N-1}(1)$,
one sees that,
\begin{align*}
\mathbb{E}(\psi(\sigma,v)-\psi(\sigma',v'))^{2}\leq & \frac{2p^{2}}{N^{2}}\mathbb{E}(H_{N}(\sigma)-H_{N}(\sigma'))^{2}+\frac{2p^{2}(p-1)^{2}}{N^{p-1}}\sum\left(\sigma_{i_{1}}\cdots\sigma_{i_{p-2}}\right)^{2}(v_{l}v_{m}-v'_{l}v'_{m})^{2}\\
 & +\frac{2p^{2}(p-1)^{2}}{N^{p-1}}\sum\left((\sigma_{i_{1}}\cdots\sigma_{i_{p-2}}-\sigma_{i_{1}}'\cdots\sigma_{i_{p-2}}')v_{l}v_{m}\right)^{2}\,.
\end{align*}
where the above sums are over $l,m,i_{1},...,i_{p-2}\in[N]$. The
first term we leave as is and bound the sum of the latter two terms:
\begin{align*}
\frac{1}{N^{p-1}}\sum\left(\sigma_{i_{1}}\cdots\sigma_{i_{p-2}}\right)^{2}(v_{l}v_{m}-v'_{l}v'_{m})^{2}\lesssim_{p} & \frac{1}{N}\sum_{l,m=1}^{N}(v_{l}v_{m}-v'_{l}v_{m}')^{2}\,,
\end{align*}
and similarly,
\begin{align*}
\frac{1}{N^{p-1}}\sum\left((\sigma_{i_{1}}\cdots\sigma_{i_{p-2}}-\sigma_{i_{1}}'\cdots\sigma_{i_{p-2}}')v_{l}v_{m}\right)^{2}\lesssim_{p} & \frac{1}{N^{p-1}}\sum_{i_{1},...,i_{p-2}=1}^{N}\left(\sigma_{i_{1}}\cdots\sigma_{i_{p-2}}-\sigma_{i_{1}}'\cdots\sigma_{i_{p-2}}'\right)^{2}\,.
\end{align*}
Putting this together, we see that for any $\sigma,\sigma'\in\mathcal{S}^{N}$,
$v,v'\in S^{N-1}(1)$, 

\[
\E\left(\psi(\sigma,v)-\psi(\sigma',v')\right)^{2}\lesssim_{p}\E\left(\phi(\sigma,v)-\phi(\sigma',v')\right)^{2}\,.
\]
Thus by the Sudakov-Fernique inequality \cite{LedouxTalagrand}, we
have that 
\begin{align*}
\E\left[\overline{r}(H)\right]& = \E\left[\sup_{\sigma\in\mathcal{S}^{N}}\sup_{v\in S^{N-1}(1)}\psi(\sigma,v)\right]\\
&\lesssim_{p} \E\left[\sup_{\sigma\in\mathcal{S}^{N},v\in S^{N-1}(1)}\phi(\sigma,v)\right]\\
&\lesssim_{p} \frac{1}{N}\E\left[\sup_{x\in\mathcal{S}^{N}}H_{N,p-2}(x)+\sup_{x\in\cS^{N}}H_{N,2}(x)+\sup_{x\in\cS^{N}}H_{N,p}(x)\right]\\
&\lesssim_{p} 1\,.
\end{align*}
The second to last inequality comes from scaling $v$, and the last
inequality is a direct consequence of \prettyref{lem:(Auffinger-Ben-Arous)}.
Thus we have the first inequality in~\prettyref{lem:spectral-radius-bound}.

We now turn to proving the second inequality. To this end observe
that for every $\sigma\in\mathcal{S}^{N}$, $v\in S^{N-1}(1)$, we
have that
\[
\mathbb{E}[\psi(\sigma,v)^{2}]\leq\frac{2p^{2}(p-1)^{2}}{N^{p-1}}\sum_{l,m,i_{1},...,i_{p-2}=1}^{N}(\sigma_{i_{1}}\cdots\sigma_{i_{p-2}}v_{l}v_{m})^{2}+\frac{2}{N^{2}}\mathbb{E}[H_{N,p}(\sigma)^{2}]\lesssim_{p}\frac{1}{N}\,.
\]
The result then follows by Borell's inequality \cite{Led01}.
\end{proof}

\subsection{Previous Results\label{sub:Previous-results}}
We now remind the reader of several  recent results that 
give a good understanding of the critical points of $H$ with near-minimal
energy. These will be important to the proof of item (1) of~\prettyref{thm:spectral gap}.

We begin by observing that the conditional law of $H$ in a neighborhood of a critical point
has a simple explicit form in terms of other $p$-spin models.
This result follows by direct calculations as can be seen, for example in \cite{SubGibbs16}. We
state the result in the weakest form that we need. For each $x\in\mathcal{S}^{N}$
define the following conditional measure,
\begin{align*}
\prob_{u}(\cdot)= & \mathbb{P}(\cdot\mid H(x)=u,\nabla H\restriction_{x}=0)\,,
\end{align*}
with corresponding expectation $\E_{u}$, where the dependence on
$x$ is implicit. Dropping the dependence on $x$ is justified as
this law is invariant in $x$ by isotropy. Evidently, this is the
law of $H$ conditioned on the event that $x$ is a critical point of $H$
with energy $u$.
\begin{lem}
\label{lem:H-reform} Let $u\in\R$ and $x\in\cS^{N}$. Then, with
respect to $\prob_{u}$, $H_{N}(\sigma)$ satisfies 
\[
H_{N}(\sigma)\eqdist uR(\sigma,x)^{p}+Y_{N}(\sigma)\,,
\]
where $Y_{N}(\sigma)$ is a centered, smooth Gaussian process satisfying,
\begin{align*}
\mbox{Cov}(Y_{N}(\sigma),Y_{N}(\sigma')) & =Nf(R(\sigma,\sigma'))\,,\qquad\mbox{and}\\
\E\left[\max_{\sigma\in\cS^{N}}Y_{N}(\sigma)\right] & <\infty\,,
\end{align*}
where $f$ is a polynomial of degree $p$ whose coefficients depend
only on $p$.\end{lem}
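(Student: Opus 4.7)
My approach is a direct Gaussian conditioning computation. Since $H_N$ is a centered Gaussian process on $\cS^N$ with covariance $\mathbb E[H(\sigma)H(\sigma')] = N R(\sigma,\sigma')^p$ and the event $\{H(x)=u,\,\nabla H|_x=0\}$ is a linear condition on the process, the conditional law under $\mathbb P_u$ is itself Gaussian, determined by its conditional mean and covariance. By isotropy of the law of $H$, I would fix a reference point $x\in\cS^N$ (say $\sqrt{N}\,e_1$) without loss of generality.

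The first step is to compute the conditional mean via Gaussian regression. The decisive simplification is that in the Gaussian Hilbert space, $H(x)$ is orthogonal to every tangential derivative $v\cdot\nabla_E H(x)$ with $v\in T_x\cS^N$. Extending $H$ to $\R^N$ as a $p$-homogeneous polynomial and differentiating the covariance kernel gives
\[
\mathbb E\bigl[H(x)\,(v\cdot\nabla_E H(x))\bigr] = p\,R(x,x)^{p-1}\,(v\cdot x)=0,
\]
since $v\cdot x=0$ for tangential $v$. Regression therefore splits cleanly into the two orthogonal blocks, yielding
\[
\mathbb E\bigl[H(\sigma) \mid H(x)=u,\, \nabla H|_x = 0\bigr] = \frac{\mathbb E[H(\sigma)H(x)]}{\mathbb E[H(x)^2]}\,u + 0 = u\, R(\sigma,x)^p,
\]
the second contribution being zero because $\nabla H|_x$ is conditioned to equal the zero vector.

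Setting $Y_N(\sigma):= H_N(\sigma) - u R(\sigma,x)^p$, under $\mathbb P_u$ this is a centered Gaussian process, and it inherits smoothness from $H_N$. Its covariance is computed directly from the regression formula by using $\mathbb E[H(\sigma)\,v\cdot\nabla_E H(x)] = p\, R(\sigma,x)^{p-1}(v\cdot\sigma)$ and summing over a tangential orthonormal basis, yielding
\[
\mathrm{Cov}\bigl(Y_N(\sigma), Y_N(\sigma')\bigr) = N\bigl[R(\sigma,\sigma')^p + (p-1)R(\sigma,x)^p R(\sigma',x)^p - p\, R(\sigma,x)^{p-1} R(\sigma',x)^{p-1} R(\sigma,\sigma')\bigr],
\]
which is a polynomial of total degree $p$ in the overlaps with coefficients depending only on $p$. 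Bounding the $R(\sigma,x),R(\sigma',x)$ factors by $1$ places this under a kernel of the form $Nf(R(\sigma,\sigma'))$ for a polynomial $f$ of degree $p$, matching the weakest form stated in the lemma. Finiteness of $\mathbb E[\max_{\sigma\in\cS^N}Y_N(\sigma)]$ then follows by a Sudakov--Fernique comparison against a standard $p$-spin-type Gaussian process on $\cS^N$ together with Dudley's entropy integral, mirroring the argument indicated for Lemma~\ref{lem:(Auffinger-Ben-Arous)}.

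The main point requiring care is the joint regression against the $(N{-}1)$-dimensional tangential gradient $\nabla H|_x$: one must correctly identify the covariance matrix of the tangential derivatives on $T_x\cS^N$ (which, by isotropy, is a scalar multiple of the identity) and the corresponding projection of $\sigma$ onto $T_x\cS^N$. Once the orthogonality between $H(x)$ and $\nabla H|_x$ in the Gaussian Hilbert space is in hand, the remaining calculations are a routine application of the Gaussian regression formula and standard Gaussian-process concentration bounds.
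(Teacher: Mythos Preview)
Your approach is essentially the same as the paper's: compute the conditional law by Gaussian regression, using that $(H_N(\sigma),\nabla H_N(\sigma))$ are jointly Gaussian and that $H(x)$ is orthogonal to the tangential gradient. The paper simply defers the explicit regression to \cite[Lemmas~14--15]{SubGibbs16}, whereas you carry it out directly; your formula for the conditional covariance is the correct one, and your observation that it depends on $R(\sigma,x),R(\sigma',x)$ as well as $R(\sigma,\sigma')$ is accurate---the lemma statement is deliberately loose (``weakest form that we need''), and in the paper's later uses only the bound $\mathrm{Var}_u(H(\sigma))\lesssim_p N$ and the mixed $p$-spin structure for concentration are actually invoked.

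The one place you diverge is in the bound on $\E[\max_{\cS^N} Y_N]$: you go through Sudakov--Fernique plus Dudley, whereas the paper argues more directly that $Y_N$ is a.s.\ continuous on a compact space (hence $\max Y_N<\infty$ a.s.), and then applies Borell's inequality together with the covariance bound $\sup_\sigma\mathrm{Var}(Y_N(\sigma))\lesssim_p N$ to conclude finiteness of the expectation. Both routes are valid; the paper's is shorter since only finiteness (not an $O(N)$ bound) is claimed here.
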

\begin{proof}
Recall that $(H_N(\sigma),\nabla H_N(\sigma))$ are jointly Gaussian. 
The distributional equality then follows by computing the conditional law
of $H$ given $\nabla H(x)$ and $H(x)$. See, for example, \cite[Lemmas~14--15]{SubGibbs16}. Since $Y$ is
a.s.\ a continuous Gaussian process on a compact space, $\max_{\mathcal{S}^{N}}Y_{N}$
is a.s.\ finite. The last result follows from this, the covariance estimate and Borell's inequality
(see, e.g., \cite{Led01}).
\end{proof}

In the subsequent, it will be useful to understand basic properties
of the local minima of the Hamiltonian. To this end, we introduce
the following notation regarding the critical points of $H.$ Observe
that $H$ is smooth, and almost surely Morse. (A function is Morse if its critical points are non-degenerate.) 
Furthermore, it has a global minimum that is a.s.\ unique for $p$ odd and unique modulo the
reflection symmetry $\sigma\mapsto-\sigma$ for $p$ even, where we
note that every smooth real-valued function on the sphere has finitely
many critical points.

A natural question is to count the expected number of critical points of $H$.
This was studied in~\cite{ABC13}.
Let 
\[
\Theta_p(E)=\lim_{N\to\infty} \frac 1N \log \mathbb E[|\{x:\nabla H(x)=0, H(x)\leq EN\}|].
\]
In \cite{ABC13}, it was shown that $\Theta_p(E)$ has the following explicit form.
\[
\Theta_{p}(E)=\begin{cases}
\frac{1}{2}+\frac{1}{2}\log(p-1)-\frac{E^{2}}{2}+\int_{-2}^{2}\frac{1}{2\pi}\sqrt{4-x^{2}}\log\abs{x-E}dx & E<0\\
\frac{1}{2}\log(p-1) & E\geq0
\end{cases}
\]
(N.b. This result will not be used in our arguments in an essential way. We include 
it to clarify the exposition surrounding the following notions.)

With this in hand, we then observe the following important result of Subag--Zeitouni regarding the extremal process
for $H$. For every fixed $N$, if $p$ is even, order the locations of the local minima of $H$
as $x_{\pm1},x_{\pm2},...\in\mathcal{S}^{N}$, where for $x_{i},x_{j}$
two local minima, $|i|<|j|$ if 
\begin{align*}
H(x_{i})\leq & H(x_{j})\,,
\end{align*}
and $x_{i}=-x_{-i}$; if $p$ is odd, order them simply as $x_1,x_2,...\in \mathcal S^N$. Finally, let $m_N$ be the quantity
\[
m_{N}=-E_{0}N+\frac{1}{2\Theta_{p}'(E_{0})}\log N-K_{0}\,,
\]
 where $K_{0}$ is an explicitly defined constant (see \cite[Eq. (2.6)]{SubZeit16}), and $E_0$ is the unique zero of $\Theta_p$.
 (We remark here that $E_0$ is the same constant mentioned in Remark~\ref{rem:explicit-E_0}.)
 
\begin{prop}[{\cite[Theorem 1]{SubZeit16}}]
\label{prop:(Subag-Zeitouni)}  For any $p\geq3$, we have that 
\[
\frac{2}{(3+(-1)^{p})}\sum_{\sigma:\nabla H\restriction_{\sigma}=0}\delta_{H(\sigma)-m_{N}}\xrightarrow[N\to\infty]{(d)}PPP(e^{\Theta'(-E_{0})x}dx)\,,
\]
where $PPP(f(x)dx)$ denotes the Poisson point process of intensity
$f(x)$, and the convergence is in distribution with respect to the
vague topology. 
\end{prop}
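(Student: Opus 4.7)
The plan is to prove the Poisson point process convergence via the Kac--Rice formula for critical points combined with Kallenberg's criterion (convergence of factorial moments against compactly supported test functions).

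Step 1 (one-point intensity). By the Kac--Rice formula and the isotropy of $H$, for any interval $I\subset\R$,
\[
\E\!\left[\sum_{\sigma:\nabla H(\sigma)=0}\indicator{H(\sigma)\in I}\right] = C_N\cdot\E\!\left[\abs{\det Hess(H(\sigma_0))}\indicator{H(\sigma_0)\in I}\,\Big|\,\nabla H(\sigma_0)=0\right]\,,
\]
with $\sigma_0\in\cS^N$ any fixed point and $C_N$ absorbing the surface area of $\cS^N$ and the value at $0$ of the Gaussian density of $\nabla H(\sigma_0)$. Conditional on $\nabla H(\sigma_0)=0$, a direct Gaussian computation (see~\cite{ABC13,ABA13}) shows that $Hess(H(\sigma_0))$ is distributed as a scalar shift of a GOE matrix, with shift proportional to $H(\sigma_0)/N$. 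Substituting the resulting joint density and performing a saddle point expansion at energies within an $O(1)$ window of $-E_0 N$ simultaneously pins down the centering $m_N$ (the $\tfrac{1}{2\Theta'_p(E_0)}\log N$ correction comes from the polynomial prefactor in the saddle point expansion and $K_0$ absorbs the remaining constant) and yields
\[
\E\!\left[\abs{\{\sigma:\nabla H(\sigma)=0,\,H(\sigma)\leq m_N+u\}}\right] \;\xrightarrow[N\to\infty]{}\; \tfrac{3+(-1)^p}{2}\,e^{\Theta'_p(-E_0)\,u}\,.
\]
The prefactor $\tfrac{3+(-1)^p}{2}$ (equal to $2$ for even $p$ and $1$ for odd $p$) records the $\sigma\mapsto-\sigma$ symmetry present only for even $p$; dividing by it gives the candidate limiting intensity $e^{\Theta'_p(-E_0)x}dx$.

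Step 2 (Kallenberg / factorial moments). By Kallenberg's theorem, it suffices to show that for each $k\geq 2$ and disjoint bounded intervals $A_1,\ldots,A_k\subset\R$, the joint $k$-th factorial moments converge to $\prod_i \mu(A_i)$ with $\mu(dx)=e^{\Theta'_p(-E_0)x}\,dx$. The two-point case unfolds via
\[
\E\!\left[\sum_{\sigma\neq\sigma'}\indicator{\nabla H(\sigma)=\nabla H(\sigma')=0,\,H(\sigma)\in A,\,H(\sigma')\in A'}\right] = \int_{(\cS^N)^2}\mathcal{K}_2(\sigma,\sigma';A,A')\,dV(\sigma)\,dV(\sigma')\,,
\]
where $\mathcal{K}_2$ is the two-point Kac--Rice density. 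I would partition the integration domain by overlap $R=R(\sigma,\sigma')$: for $\abs R$ bounded away from $1$, the joint Gaussian law of $(H,\nabla H, Hess\,H)$ evaluated at $\sigma$ and $\sigma'$ is nondegenerate, and an explicit conditional covariance computation produces factorization of $\mathcal{K}_2$ into one-point kernels up to a multiplicative $1+o(1)$ error; for $\abs R$ close to $1$, the uniform Hessian control of Lemma~\ref{lem:spectral-radius-bound} (combined with Taylor expansion of $\nabla H$ at $\sigma$ along the geodesic to $\sigma'$) shows that two distinct critical points cannot coexist in a small geodesic neighborhood, so this regime contributes negligibly. Analogous overlap decompositions handle all higher $k$.

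The principal technical obstacle is precisely the large-overlap regime in Step 2: the joint Gaussian law of the relevant derivatives degenerates as $\abs R\to 1$, so $\mathcal{K}_2$ blows up there, and the bound on its contribution must be quantitative enough to dominate the $\abs{\cS^N}^2 = e^{O(N)}$ prefactor coming from the volume integration. Once this estimate is in hand, Kallenberg's theorem applied against compactly supported continuous test functions yields the proposition in the claimed vague topology.
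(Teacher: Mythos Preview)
The paper does not prove this proposition at all: it is quoted directly from \cite{SubZeit16} and used only through Corollary~\ref{cor:loc-of-minima}. There is therefore no ``paper's own proof'' to compare your sketch against.

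As for the sketch itself, the overall architecture (Kac--Rice for the one-point intensity, then $k$-point Kac--Rice plus Kallenberg for the factorial moments) is indeed the route taken in \cite{SubZeit16}. Two of your steps, however, do not go through as stated. First, your handling of the near-diagonal region $\abs{R}\to 1$ invokes Lemma~\ref{lem:spectral-radius-bound}, but that lemma only bounds the spectral radius of $Hess(H)$ from \emph{above}; to forbid two nearby critical points via Taylor expansion you would need a quantitative \emph{lower} bound on the Hessian at critical points, which is not available a priori. In \cite{SubZeit16} this regime is controlled instead by directly bounding the integral of the two-point Kac--Rice density over the thin shell, exploiting the explicit joint Gaussian law and the fact that the determinant factor suppresses near-degenerate Hessians. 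Second, the ``factorization up to $1+o(1)$'' for $\abs{R}$ bounded away from $1$ is the substantive part of the proof: it relies on the second-moment matching for the complexity at level $-E_0$ established in \cite{Sub15}, showing that conditioning on one critical point at the extremal energy does not change the exponential rate of the count of others. Without that input the factorization is not a routine covariance computation, since the conditional covariance of $(H,\nabla H,Hess\,H)$ at $\sigma'$ given the data at $\sigma$ does depend nontrivially on $R$, and one must track this dependence through the saddle-point analysis.
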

In our paper, we do not need the full power of this deep result. Instead we only need 
the following simple corollary of \prettyref{prop:(Subag-Zeitouni)}.
\begin{cor}
\label{cor:loc-of-minima}For any $k\in\N,$ if $x_{1},...,x_{k}\in\cS^{N}$
are the locations of the ground state to the $k$-th smallest local
minima, respectively, we have 
\[
(H_{N}(x_{l})-m_{N})_{l\in[k]}\xrightarrow{(d)}Y\,,
\]
where $Y$ is a random variable supported on all of $\R^{k}$.
\end{cor}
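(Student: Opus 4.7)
The strategy is to deduce \prettyref{cor:loc-of-minima} from \prettyref{prop:(Subag-Zeitouni)} by converting the stated vague convergence of critical values to convergence in distribution of the leftmost $k$ atoms. Let $\eta_{N}$ denote the rescaled point process appearing in that proposition and let $\eta$ be its limit, a Poisson point process on $\R$ with intensity $e^{\alpha x}\,dx$ where $\alpha:=\Theta'(-E_{0})>0$. Since this intensity is integrable on $(-\infty,a]$ for every $a$, the atoms of $\eta$ can be listed almost surely as $Y_{1}<Y_{2}<\cdots$ with $Y_{j}\to+\infty$, and the vector $Y:=(Y_{1},\ldots,Y_{k})$ has joint density proportional to $\exp(-\alpha^{-1}e^{\alpha y_{k}})\prod_{j\le k} e^{\alpha y_{j}}$ on $\{y_{1}<\cdots<y_{k}\}\subset\R^{k}$, which is strictly positive and gives the claimed support.

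Two ingredients are needed to promote vague convergence to convergence of the $k$ smallest atoms. First, a reduction from critical points to local minima: the index-wise complexity computations of \cite{ABC13} provide some $E_{\infty}<E_{0}$ such that the expected number of critical points of index $\geq 1$ with energy $\leq -E_{\infty}N$ is $e^{-\Omega(N)}$. Since $m_{N}+[-M,M]\subset(-\infty,-E_{\infty}N)$ for $N$ large, within this window critical points and local minima coincide with probability $1-o_{N}(1)$; the $\pm$-identification for $p$ even is already absorbed in the normalization $\tfrac{2}{3+(-1)^{p}}$ appearing in $\eta_{N}$. Second, a tightness estimate: for any $\epsilon>0$ one needs $M$ so that, for $N$ large, the smallest $k$ atoms of $\eta_{N}$ all lie in $[-M,M]$ with probability at least $1-\epsilon$. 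The bound $\eta_{N}([-M,M])\geq k$ follows from vague convergence since the Poisson count $\eta([-M,M])$ has mean $\alpha^{-1}(e^{\alpha M}-e^{-\alpha M})\to\infty$. The bound $\eta_{N}((-\infty,-M))=0$ is the main obstacle: vague convergence on $\R$ offers no lower-tail control for free. I would obtain this from a Kac-Rice first-moment estimate underlying \cite{ABC13,SubZeit16}, giving $\E[\eta_{N}((-\infty,-M])]\to\alpha^{-1}e^{-\alpha M}$ as $N\to\infty$; Markov's inequality then yields $\prob(\eta_{N}((-\infty,-M])>0)\leq\alpha^{-1}e^{-\alpha M}+o_{N}(1)$.

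With both ingredients in hand, the conclusion is routine. On the high-probability event that the smallest $k$ atoms of $\eta_{N}$ lie in $[-M,M]$ and are local minima, the map sending a finite configuration on $[-M,M]$ (with at least $k$ points) to its ordered $k$ smallest atoms is continuous in the vague topology. As $\pm M$ are almost surely continuity points of $\eta$, vague convergence gives $\eta_{N}\big|_{[-M,M]}\xrightarrow{(d)}\eta\big|_{[-M,M]}$, so the continuous mapping theorem yields $(H_{N}(x_{l})-m_{N})_{l\in[k]}\xrightarrow{(d)}(Y_{1},\ldots,Y_{k})=Y$, as desired. The only non-routine step in this outline is the lower-tail tightness; the rest is standard Poissonization.
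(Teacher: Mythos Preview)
The paper does not supply a proof of this corollary; it is presented as a ``simple corollary'' of \prettyref{prop:(Subag-Zeitouni)} and left to the reader. Your proposal is a correct and careful fleshing-out of the intended argument: identify $Y$ as the vector of the $k$ leftmost ordered atoms of the limiting Poisson process, compute its density on the ordered simplex (which is indeed everywhere positive, matching the support claim), reduce from critical points to local minima via the index-wise complexity estimates of \cite{ABC13}, and then upgrade vague convergence to convergence of the ordered atoms by combining tightness with the continuous mapping theorem.

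You are right to single out the lower-tail tightness $\prob(\eta_{N}((-\infty,-M])>0)\to 0$ as the one step not automatic from the vague convergence statement of \prettyref{prop:(Subag-Zeitouni)} alone. The first-moment/Kac--Rice route you sketch is exactly how this is handled in \cite{SubZeit16} itself (and is implicit in their proof of the extremal process convergence), so you are not importing anything beyond what the authors had available. In short, your argument is correct and matches what the paper evidently intended by calling the result a simple corollary; there is nothing further to compare against.
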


In order to obtain our low-temperature spectral estimates, we will need to control certain natural physical 
quantities, called free energies. Recall that the \emph{free
energy density} corresponding to the partition function $Z_{N}=Z_{N,\beta}$
defined in the introduction, is given by 
\[
F_{N}=\frac{1}{N}\log Z_{N}=\frac{1}{N}\log\int_{\mathcal{S}^{N}}e^{-\beta H(\sigma)}dV(\sigma)\,.
\]
Then, for a Borel set $A\subset\mathcal{S}^{N}$, let 
\begin{align*}
Z_{N}(A)= & \int_{A}e^{-\beta H(\sigma)}dV(\sigma)\,, \qquad \mbox{and} \qquad F_{N}(A)= \frac{1}{N}\log(Z_{N}(A))\,,
\end{align*}
be the \emph{restricted partition function }and \emph{restricted free
energy} of a set $A$, respectively, so that $F_{N}(\mathcal{S}^{N})=F_{N}$.
(This is called the \emph{reduced} \emph{free energy} in \cite{SubGibbs16}.)

The main estimate we use in the low temperature regime is the following result of Subag regarding
the conditional law of the restricted free energy of bands around minima. 
More precisely, for any $x\in\cS^{N}$, $q\in(0,1)$, and any $\epsilon>0$, define the Borel
sets
\begin{align*}
\mbox{Cap}(x,q) & =\{\sigma\in\mathcal{S}^{N}:R(x,\sigma)\geq q\}\,,\\
\mbox{Band}(x,q,\epsilon) & =\{\sigma\in\cS^{N}:R(x,\sigma)\in[q-\epsilon,q+\epsilon]\}\,,
\end{align*}
which are a cap and band respectively around a point $x$ corresponding to an overlap $q$.
These satisfy the following free energy estimates near critical points.

\begin{prop}[{\cite[Proposition 19, Lemma 20]{SubGibbs16}}]
\label{prop:(Subag)-band} For every $p\geq3$, there exists a $\beta_{0}(p)$ and a $0<q_\star (p,\beta)<1$
such that for all $\beta\geq\beta_{0}$, the following holds: 
\begin{enumerate}
\item Let $a_{N}=o(N)$ and $\epsilon_{N}=o(1)$ be two sequences of positive
numbers; then for $J_{N}=(m_{N}-a_{N},m_{N}+a_{N})$ we have for
any $x\in\mathcal{S}^{N}$, $t>0$, 
\[
\lim_{N\to\infty}\sup_{u\in J_{N}}\bigg|\mathbb{P}_{u}\left(\frac{Z_{N}(\mbox{Band}(x,q_*,\epsilon_{N}))}{\E_{u}\left[Z_{N}(\mbox{Band}(x,q_*,\epsilon_{N}))\right]}\leq t\,\right)-\mathbb{P}\left(e^{Y_{*}}\leq t\right)\bigg|=0\,,
\]
for some $Y_{*}$, a normal random variable whose mean and variance
are functions of $p$ alone.
\item Furthermore, there exists $0< q_{\star \star}(p,\beta)<q_\star$ and $\Lambda(p,\beta)>0$ such that for every $x\in\mathcal{S}^{N}$ and every $\eta>0$, 
\begin{equation}
\limsup_{N\to\infty}\sup_{u\in J_{N}}\bigg|\frac{1}{N}\log\left(\E_{u}\left[Z_{N}(\mbox{Band}(x,q_*,\eta N^{-1/2})\right]\right)-\Lambda(p,\beta)\bigg|=0\,,\label{eq:subag-2a}
\end{equation}
and for any fixed $\epsilon>0$, 
\begin{equation}
\limsup_{N\to\infty}\sup_{u\in J_{N}}\frac{1}{N}\log\left(\E_{u}\left[Z_{N}(\mbox{Cap}(x,q_{**})\backslash\mbox{Band}(x,q_*,\epsilon))\right]\right)<\Lambda(p,\beta)\,.\label{eq:subag-2b}
\end{equation}
\end{enumerate}
\end{prop}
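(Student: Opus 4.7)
My plan is to use Lemma~\ref{lem:H-reform} to reduce the problem to a standard spin glass partition function computation on a lower-dimensional sphere. Under $\mathbb{P}_u$ we may write $H_N(\sigma) = u R(\sigma,x)^p + Y_N(\sigma)$, where $Y_N$ is a centered Gaussian process with covariance $Nf(R(\sigma,\sigma'))$. On $\mbox{Band}(x,q_\star,\epsilon_N)$ we have $R(\sigma,x)^p = q_\star^p + O(\epsilon_N)$, and since $u \in J_N$ implies $|u|\epsilon_N = o(N)$, the deterministic term contributes a factor $e^{-\beta u q_\star^p}(1+o(1))$ that is the same in $Z_N(\mbox{Band})$ and in $\E_u[Z_N(\mbox{Band})]$ and so cancels in the ratio. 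The problem therefore reduces to controlling $\int_{\mbox{Band}} e^{-\beta Y_N(\sigma)}\, dV(\sigma)$ and its expectation.

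To analyze this, I would foliate the band by its level sets of $R(\cdot,x)$: writing $\sigma = q x + \sqrt{1-q^{2}}\,\tau$ with $\tau \in x^{\perp} \cap S^{N-1}(\sqrt{N})$, each fiber is an $(N-2)$-sphere on which $Y_N$ is (up to an overall scaling in $q$) a mixed $p$-spin Hamiltonian whose covariance comes from expanding $f(q^{2} + (1-q^{2})R(\tau,\tau'))$ in powers of $R(\tau,\tau')$. The key point is to choose $q_\star$ so that this effective mixed model lies in a replica-symmetric regime: then the free energy per spin of the fibered model converges to an explicit limit $\Lambda(p,\beta)$, giving~\eqref{eq:subag-2a}, and its $\log$-partition function satisfies a Gaussian CLT with an $O(1)$ fluctuation. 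Since the width of the band in $q$ is $\epsilon_N = o(1)$ (or $\eta N^{-1/2}$ in~\eqref{eq:subag-2a}), integrating $q$ against a slowly varying factor only contributes a multiplicative lower order term, and the log-normal limit $e^{Y_\star}$ in part~(1) is inherited from this fiberwise CLT. The dependence of the mean and variance of $Y_\star$ on $p$ alone comes from the fact that both are determined by the fiber covariance $f$, once $q_\star$ is fixed.

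For~\eqref{eq:subag-2b}, I would perform the same fibration and then a Laplace-type estimate in $q$. The expected log-density on the fiber at overlap $q$ is a function $\Phi(q)$ comprising the fiberwise free energy plus the volume entropy $\tfrac{1}{2}\log(1-q^{2})$; by construction of $q_\star$, $\Phi$ has a strict maximum at $q_\star$ on $[q_{\star\star},1]$ with value $\Lambda(p,\beta)$. Integrating $q$ over $[q_{\star\star},1]\setminus[q_\star - \epsilon, q_\star + \epsilon]$ therefore produces an expected restricted partition function growing at exponential rate strictly less than $\Lambda(p,\beta)$, uniformly in $u \in J_N$. The hardest step, and the one where I would expect real work, is the Gaussian CLT in part~(1): the reduction to a fibered mixed $p$-spin model on an $(N-2)$-sphere is formal, but establishing genuine Gaussian fluctuations of $\log Z$ at a non-trivial temperature requires a Gaussian interpolation or martingale decomposition argument in the spirit of Talagrand and Chen--Panchenko, combined with a careful verification that the effective temperature on the fiber is in the replica-symmetric regime for the chosen $q_\star$; this last verification is essentially the technical heart of~\cite{SubGibbs16}.
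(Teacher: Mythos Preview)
This proposition is not proved in the paper: it is quoted verbatim from Subag's work \cite[Proposition~19, Lemma~20]{SubGibbs16} and used as a black box in the proof of item~(1) of Theorem~\ref{thm:spectral gap}. There is therefore no proof in the present paper to compare your proposal against.

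That said, your sketch is a faithful high-level outline of the strategy in \cite{SubGibbs16}: the fibration $\sigma = qx + \sqrt{1-q^2}\,\tau$, the reduction of $Y_N$ on each fiber to an effective mixed $p$-spin model on an $(N-2)$-sphere, the choice of $q_\star$ so that the fiber model sits in a replica-symmetric regime (permitting a second-moment computation and a CLT for $\log Z$), and the Laplace-type optimization over $q$ to obtain~\eqref{eq:subag-2b} are indeed the ingredients Subag uses. You correctly identify the CLT in part~(1) as the technically delicate step. What your sketch elides---and what constitutes the bulk of the work in \cite{SubGibbs16}---is the precise construction of $q_\star$ and $q_{\star\star}$ and the verification that the fiber model is genuinely replica symmetric (so that the second moment matches the first moment squared at the exponential scale); this requires a careful analysis of the Parisi functional for the effective mixture and is not a routine check. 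For the purposes of the present paper, though, none of this is needed: Proposition~\ref{prop:(Subag)-band} is an input, not an output.
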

\noindent Henceforth, $q_*(p,\beta)$, $q_{**}(p,\beta)$, and $\Lambda(p,\beta)$ will be those constants given by Proposition~\ref{prop:(Subag)-band}. 

\section{Free Energy Estimates}

In this section, we prove the key equilibrium estimate for the proof
of exponentially slow relaxation at low temperature. In particular,
we compute ratios of Gibbs probabilities at the exponential level.
We begin first with the a modification of a classical concentration
estimate. We then turn to the main estimate in the following subsection.
Finally we state as corollaries the precise applications of these
results that we will use in the subsequent sections.

\subsection{Concentration of Restricted Free Energies\label{sub:Concentration-of-RFE}}

We begin by briefly recalling the fact that the restricted free energy
of any Borel set concentrates under both $\mathbb{P}$ and $\mathbb{P}_{u}$.
This estimate is a modification of a classical concentration estimate
for free energies. We include a proof for the reader's convenience.
\begin{lem}
\label{lem:fe-conc} For any Borel set $E\in \cB(\mathcal S^N)$, the restricted free energy corresponding to $H_{N}$,
\[
F_{N}(E)=\frac{1}{N}\log\int_{E}e^{-\beta H_{N}(\sigma)}dV(\sigma),
\]
 concentrates with respect to $\prob$ and, for any $x\in\cS^{N}$,
with respect to the conditional measure $\prob_{u}$. That is, there
is a constant $c>0$ depending only on $\beta$ and $p$ such that
for any $N$ and any $E\in\cB(\cS^{N})$, 

\begin{align*}\prob\left(\abs{F_{N}(E)-\E F_{N}(E)}>\epsilon\right) & \lesssim e^{-cN\epsilon^{2}}\,,\\
\prob_{u}\left(\abs{F_{N}(E)-\E_u F_{N}(E)}>\epsilon\right) & \lesssim e^{-cN\epsilon^{2}}\,.
\end{align*}

\end{lem}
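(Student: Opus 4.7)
The plan is to view $F_N(E)$ as a Lipschitz functional of the Gaussian disorder, and to apply Borell's inequality (cited as \cite{Led01}) in both the unconditional and the conditional settings. The key observation is that, although $E$ is arbitrary, the Lipschitz constant is controlled by a uniform Gibbs-averaged quantity that is bounded by the sphere constraint, and is therefore independent of $E$.

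First, I would regard $F_N(E)$ as a function of the Gaussian vector $J=(J_{i_1\ldots i_p})\in\mathbb{R}^{N^p}$ and compute
\[
\frac{\partial F_N(E)}{\partial J_{i_1\ldots i_p}}=-\frac{\beta}{N^{(p+1)/2}}\,\langle\sigma_{i_1}\cdots\sigma_{i_p}\rangle_{E},
\]
where $\langle\cdot\rangle_E$ denotes the Gibbs average with respect to the restricted measure $\indicator{E}e^{-\beta H_N}dV/Z_N(E)$. By Jensen's inequality and the spherical constraint $\sum_i\sigma_i^2=N$,
\[
\sum_{i_1,\ldots,i_p}\langle\sigma_{i_1}\cdots\sigma_{i_p}\rangle_E^{2}\le\langle(\textstyle\sum_i\sigma_i^2)^p\rangle_E=N^p,
\]
so the Euclidean gradient of $F_N(E)$ in $J$ has squared norm at most $\beta^2/N$. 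This gives a Lipschitz constant $\beta/\sqrt{N}$ for $J\mapsto F_N(E)$ that is uniform in $E\in\mathcal{B}(\mathcal{S}^N)$. Borell's inequality then yields the first concentration estimate with $c=1/(2\beta^2)$.

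For the conditional statement, I would use Lemma~\ref{lem:H-reform} to write, under $\prob_u$,
\[
H_N(\sigma)\eqdist uR(\sigma,x)^{p}+Y_N(\sigma),
\]
where $Y_N$ is a centered Gaussian process whose covariance is $Nf(R(\sigma,\sigma'))$. The term $uR(\sigma,x)^p$ is a deterministic shift (for fixed $u$ and $x$), so the map from the underlying Gaussian noise driving $Y_N$ to
\[
F_N(E)=\frac1N\log\int_{E}e^{-\beta uR(\sigma,x)^p-\beta Y_N(\sigma)}dV(\sigma)
\]
is again Lipschitz. The same Jensen computation applied to $\partial F_N(E)/\partial Y_N(\sigma)$, combined with the fact that the covariance kernel of $Y_N$ is bounded by $N\cdot\mathrm{const}(p)$ on the diagonal, produces a Lipschitz constant of order $1/\sqrt{N}$ (with a constant depending on $\beta$ and $p$). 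A second application of Borell's inequality, now on the Gaussian space on which $\prob_u$ is supported, yields the second estimate.

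The only mild obstacle is organizing the conditional case cleanly: one needs to confirm that $\prob_u$ is indeed a Gaussian measure (conditioning on the finite-dimensional linear event $\{H(x)=u,\,\nabla H(x)=0\}$ preserves joint Gaussianity of the remaining field) and to verify the Lipschitz computation through the abstract Gaussian representation of $Y_N$. Both are standard once the decomposition of Lemma~\ref{lem:H-reform} is in place, so the proof reduces to two essentially identical applications of Gaussian concentration.
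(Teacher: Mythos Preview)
Your proposal is correct and follows essentially the same approach as the paper: both compute the gradient of $F_N(E)$ in the underlying i.i.d.\ Gaussians, bound its $\ell^2$ norm by $c(\beta,p)/\sqrt{N}$ via the sphere constraint, and invoke Gaussian concentration, using Lemma~\ref{lem:H-reform} to reduce the $\prob_u$ case to the same setup. The only cosmetic difference is that the paper handles both cases at once by writing the conditional field explicitly as $\sum_{k\le p} a_{p,k} H_{N,k}(\sigma)+g(\sigma)$, which gives a ready-made i.i.d.\ representation for the Lipschitz computation rather than appealing to an abstract realization of $Y_N$.
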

\begin{proof}
Without loss of generality, $V(E)>0$, otherwise $F_N(E)=-\infty$ identically.
Under $\mathbb P$ and, by the equality in distribution in \prettyref{lem:H-reform}, under $\mathbb P_u$, the restricted free energy
is equal in law to 
\[
\frac{1}{N}\log\int_{E}e^{-\beta X(\sigma)}dV(\sigma)\qquad \mbox{where} \qquad
X(\sigma)=\sum_{k=1}^{p}a_{p,k}H_{N,k}(\sigma)+g(\sigma)
\]
for some deterministic, smooth, $g(\sigma)$ and coefficients $a_{p,k}$ suitably chosen depending
on $p$. \color{black}Consider this more general setup and denote this free energy by $F(J,E)$
to make the dependence on the coupling coefficients in $H_{N,k}$
explicit. Observe that 
\[
\frac{\partial}{\partial J_{i_{1},\ldots i_{k}}}X(\sigma)=\frac{a_{p,k}}{N^{(k-1)/2}}\sigma_{i_{1}}\cdots\sigma_{ik}\,,
\]
so that 
\[
\nabla_{J}F(J,E)=\frac{1}{N}\left(-\beta\frac{a_{p,k}}{N^{(k-1)/2}}\left\langle \sigma_{i_{1}}\cdots\sigma_{i_{k}}\right\rangle \right)_{i_1,...,i_k:\, k\leq p}\,,
\]
where $\left\langle \cdot\right\rangle $ denotes integration with
respect to the Gibbs measure induced by $X$ conditioned on the event
$E$. (Since $V(E)>0$ by assumption and $H$ is continuous for each
choice of $J$, $\pi (E) >0$, so this is defined
in the usual sense.) Thus $F$ is $c/\sqrt{N}$- Lipschitz in $J$ for some
$c=c(\beta,a_{p,k})>0$. Since $J$ is a collection of i.i.d.\ Gaussians, this implies
the result by standard Gaussian concentration. 
\end{proof}

\subsection{Refined Free Energy Estimates}

In this subsection, we prove the main estimate we need regarding $\pi$
at low temperature. As is often the case, this result reduces to showing
that certain free energy differences are negative. These results will
come from combining the estimates from \prettyref{sub:Previous-results}
with the concentration estimate from \prettyref{sub:Concentration-of-RFE}.
The goal of this subsection is to prove the following proposition.
Recall the notation $x_{\pm1},\cdots\in\cS^{N}$ regarding the lowest
critical points from the end of \prettyref{sub:Previous-results}.
\begin{prop}
\label{prop:cap-free-energy}Fix any $k$ and let $x_{*}=x_{k}$.
Fix any $\eta>0$ and let $A(x)=\mbox{Band}(x,q_{*},\eta)$ and $B(x)=\mbox{Cap}(x,q_{**})\backslash A(x)$,
where the sets $\mbox{Band}$ and $\mbox{Cap}$ were defined in \prettyref{sub:Notation}.
Then there exists a $\beta_{0}(p)$ such that for every $\beta\geq\beta_{0}$,
there exists $c(\beta)>0$ such that, 
\[
\lim_{N\to\infty}\mathbb{P}\left(F_{N}(B(x_{*}))-F_{N}(A(x_{*}))<-c\right)=1\,.
\]
\end{prop}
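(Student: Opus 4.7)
The plan is to reduce the unconditional statement to one under the conditional law $\prob_u$, using the tightness from \prettyref{cor:loc-of-minima}, and then combine the partition function estimates of \prettyref{prop:(Subag)-band} with the concentration of \prettyref{lem:fe-conc} to obtain matching bounds on $F_N(A(x_*))$ and $F_N(B(x_*))$. Specifically, by \prettyref{cor:loc-of-minima} the sequence $(H_N(x_*) - m_N)_N$ is tight, so choosing $a_N = o(N)$ with $a_N \to \infty$, the event $\{H_N(x_*) \in J_N\}$ with $J_N = (m_N - a_N, m_N + a_N)$ holds with probability $1 - o(1)$. By isotropy of $H_N$ we may take $x_*$ to lie at a fixed reference point, and \prettyref{lem:H-reform} then identifies the conditional law of $H$ given $(\nabla H\restriction_{x_*} = 0,\,H(x_*) = u)$ with $\prob_u$. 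It therefore suffices to show, uniformly in $u \in J_N$, that $\prob_u(F_N(B(x_*)) - F_N(A(x_*)) < -c) \to 1$ for some $c(p,\beta,\eta) > 0$.

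For the lower bound on $F_N(A(x_*))$, monotonicity gives $Z_N(A(x_*)) \geq Z_N(\mbox{Band}(x_*, q_*, \eta N^{-1/2}))$ for $N$ large. Item~(1) of \prettyref{prop:(Subag)-band}, applied with $\epsilon_N = \eta N^{-1/2} = o(1)$, says that $Z_N(\mbox{Band}(x_*, q_*, \epsilon_N))/\E_u Z_N(\mbox{Band}(x_*, q_*, \epsilon_N))$ converges in distribution under $\prob_u$ to a log-normal random variable, uniformly in $u \in J_N$; the log of this ratio is therefore $o_P(N)$. Combining with \eqref{eq:subag-2a}, one deduces $\frac{1}{N} \log Z_N(\mbox{Band}(x_*, q_*, \eta N^{-1/2})) = \Lambda + o_P(1)$ under $\prob_u$, hence $F_N(A(x_*)) \geq \Lambda - o_P(1)$. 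For the upper bound on $F_N(B(x_*))$, observe that $B(x_*) = \mbox{Cap}(x_*, q_{**}) \setminus \mbox{Band}(x_*, q_*, \eta)$ is exactly the set in \eqref{eq:subag-2b} with $\epsilon = \eta$; hence there exists $\delta = \delta(p,\beta,\eta) > 0$ such that $\frac{1}{N} \log \E_u Z_N(B(x_*)) \leq \Lambda - 2\delta$ uniformly in $u \in J_N$ for large $N$. Jensen's inequality yields $\E_u F_N(B(x_*)) \leq \Lambda - 2\delta$, and \prettyref{lem:fe-conc} applied under $\prob_u$ upgrades this to $F_N(B(x_*)) \leq \Lambda - \delta$ with $\prob_u$-probability $1 - O(e^{-c'N})$. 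Subtracting, $F_N(B(x_*)) - F_N(A(x_*)) \leq -\delta$ with $\prob_u$-probability tending to $1$, uniformly in $u \in J_N$.

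The step I expect to be the most delicate is the disintegration that converts the $\prob_u$ estimate back into one under $\prob$: the conditioning in $\prob_u$ only captures that $x_*$ is \emph{some} critical point at energy $u$, whereas in the statement $x_*$ is singled out as the $k$-th smallest local minimum, an event involving the global ordering of critical energies. To handle this I would argue that the restricted free energies $F_N(A(y)), F_N(B(y))$ are determined by $H$ on a neighborhood of $y$, while the ordering constraint identifying $y = x_*$ depends on $H$ at other, far-away critical points whose joint behavior near $m_N$ is controlled by \prettyref{prop:(Subag-Zeitouni)}; combining a union bound over the $O_P(1)$ candidate minima with energies in $J_N$ and the uniform-in-$u$ nature of all the estimates above reduces the claim to the conditional bound proved in the previous paragraph. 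This Palm-theoretic step can be carried out in the spirit of the disintegrations used in \cite{SubGibbs16,SubZeit16}.
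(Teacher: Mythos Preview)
Your conditional estimates under $\prob_u$ are essentially the paper's Lemmas~\ref{lem:free-energy-A}--\ref{lem:free-energy-B}, and the overall two-step plan (prove the free-energy gap under $\prob_u$, then remove the conditioning) is the same. The gap is in the second step. Your locality heuristic---that $F_N(A(y)),F_N(B(y))$ depend only on $H$ ``near'' $y$ while the ordering constraint lives far away---does not hold: $A(y)$ and $B(y)$ are macroscopic bands and caps at fixed overlaps $q_*,q_{**}$ from $y$, not neighborhoods of $y$, so their restricted free energies depend on $H$ over a large portion of $\cS^N$ that may well contain the other low minima. No approximate-independence argument of this kind is available, and the paper uses none.

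What the paper does instead is a first-moment bound over \emph{all} critical points in $J_N$, via the Kac--Rice application stated as \prettyref{lem:kac-rice-application} (Lemma~38 of~\cite{SubGibbs16}): with $G(x)=F_N(B(x))-F_N(A(x))$ one gets
\[
\prob\big(\exists\,\sigma\in\mathscr C(J_N):E^c(\sigma,\delta)\big)\ \le\ \E\Big[\textstyle\sum_{\sigma\in\mathscr C(J_N)}\boldsymbol 1_{E^c(\sigma,\delta)}\Big]\ \lesssim\ e^{c_p a_N}\sup_{u\in J_N}\sqrt{\prob_u\big(E^c(x,\delta)\big)}\,,
\]
and since $x_k\in\mathscr C(J_N)$ with probability $1-o(1)$ by \prettyref{cor:loc-of-minima}, the ordering issue disappears. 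For this to work with your choice $a_N\to\infty$ one needs $\prob_u(E^c)$ exponentially small in $N$, not merely $o(1)$. Your bound on $F_N(B)$ already has this rate via \prettyref{lem:fe-conc}, but your bound on $F_N(A)$, coming only from the distributional convergence in item~(1) of \prettyref{prop:(Subag)-band}, gives just $\prob_u(F_N(A)<\Lambda-\delta)=o(1)$. The paper upgrades this by using item~(1) not to control $F_N(\tilde A)$ directly but to show $\bigl|\E_u F_N(\tilde A)-\tfrac1N\log\E_u Z_N(\tilde A)\bigr|\to 0$; combined with \eqref{eq:subag-2a} this gives $\E_u F_N(\tilde A)\to\Lambda$, and \emph{then} concentration (\prettyref{lem:fe-conc}) yields $\prob_u(F_N(A)<\Lambda-\delta)\lesssim e^{-cN\delta^2}$, which beats $e^{c_p a_N}$.
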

Before proving this proposition we will need estimates on $F_{N}(A(x))$
and $F_{N}(B(x))$ under $\prob_{u}$. To this end, begin by observing
that by \prettyref{lem:fe-conc}, for any $x\in\mathcal{S}^{N}$,
$F_{N}(B(x))$ and $F_{N}(A(x))$ concentrate around their respective
means; in particular, there exists a constant $c(\beta,p)>0$ such that for
every $\delta>0$, 
\begin{equation}
\prob_{u}(|F_{N}(B(x))-\E_{u}F_{N}(B(x))|>\delta)\lesssim e^{-cN\delta^{2}}\,,\label{eq:fe-concentration}
\end{equation}
and similarly for $F_{N}(A)$. 

We begin the proof with the following two lemmas. Recall the definitions
of $q_{*},E_{0}$, and $\Lambda$ from \prettyref{sub:Previous-results}.
The first lemma shows that the probability that $A(x)$ has free energy that is smaller than $\Lambda$ is vanishing in the limit.
\begin{lem}
\label{lem:free-energy-A}Let $a_{N}$ and $J_{N}$ be as in \prettyref{prop:(Subag)-band}
and $\eta$ and $A(x)$ be as in \prettyref{prop:cap-free-energy}.
There exists $c(\beta,p)>0$ such that for every $\delta>0$, 
\begin{equation}
\sup_{x\in\cS^{N}}\sup_{u\in J_{N}}\prob_{u}\left(F_{N}(A(x))<\Lambda(p,\beta)-\delta\right)\lesssim e^{-cN\delta^2}\,.\label{eq:fe-A-bound}
\end{equation}
\end{lem}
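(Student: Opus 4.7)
The plan is to reduce the claim about the macroscopic band $A(x) = \mbox{Band}(x,q_*,\eta)$ to the corresponding claim for a microscopic band of width $\eta N^{-1/2}$, where Proposition~\ref{prop:(Subag)-band} applies directly, and then to combine that reduction with the concentration estimate of Lemma~\ref{lem:fe-conc}. By the rotational invariance of $H$ noted after Lemma~\ref{lem:H-reform}, the $\mathbb{P}_u$-probability in the statement is independent of $x$, so I would drop the outer supremum and fix any $x_0 \in \mathcal{S}^N$. Setting $\widetilde A_N := \mbox{Band}(x_0,q_*,\eta N^{-1/2})$, one has $\widetilde A_N \subseteq A(x_0)$ and hence $F_N(A(x_0)) \geq F_N(\widetilde A_N)$, so it suffices to establish
\begin{equation*}
\sup_{u \in J_N} \mathbb{P}_u\bigl( F_N(\widetilde A_N) < \Lambda - \delta \bigr) \lesssim e^{-cN\delta^2}.
\end{equation*}

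For this narrow band, Proposition~\ref{prop:(Subag)-band}(2) gives $\frac{1}{N}\log \mathbb{E}_u Z_N(\widetilde A_N) = \Lambda + o(1)$ uniformly in $u \in J_N$, while Proposition~\ref{prop:(Subag)-band}(1) provides the uniform-in-$u$ distributional convergence of $Z_N(\widetilde A_N)/\mathbb{E}_u Z_N(\widetilde A_N)$ to the almost surely positive random variable $e^{Y_*}$. Together these yield $F_N(\widetilde A_N) = \Lambda + o_{\mathbb{P}_u}(1)$, uniformly in $u\in J_N$. I would then upgrade this in-probability statement to the quantitative lower bound on the conditional mean, $\mathbb{E}_u F_N(\widetilde A_N) \geq \Lambda - o(1)$ uniformly in $u \in J_N$, by verifying uniform integrability of $\{F_N(\widetilde A_N)\}_N$ under $\{\mathbb{P}_u\}_{u \in J_N}$. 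This in turn follows from the deterministic bound $F_N(\widetilde A_N) \geq \frac{1}{N}\log V(\widetilde A_N) - \beta \sup_{\widetilde A_N} |H|/N$ combined with the Gaussian tail on $\sup|H|/N$ from Lemma~\ref{lem:(Auffinger-Ben-Arous)}, applied through the decomposition $H(\sigma) = uR(\sigma,x_0)^p + Y(\sigma)$ of Lemma~\ref{lem:H-reform}.

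Once the mean is controlled, the proof is finished by applying Lemma~\ref{lem:fe-conc} to the Borel set $\widetilde A_N$, which yields a subgaussian concentration of $F_N(\widetilde A_N)$ about $\mathbb{E}_u F_N(\widetilde A_N)$ with a constant depending only on $(\beta,p)$, uniformly in $u$ and $x_0$. Choosing $N \geq N_0(\delta)$ so that $\mathbb{E}_u F_N(\widetilde A_N) \geq \Lambda - \delta/2$ for all $u\in J_N$, this gives
\begin{equation*}
\sup_{u\in J_N}\mathbb{P}_u\bigl(F_N(\widetilde A_N) < \Lambda - \delta\bigr) \leq \sup_{u\in J_N}\mathbb{P}_u\bigl(F_N(\widetilde A_N) - \mathbb{E}_u F_N(\widetilde A_N) < -\tfrac{\delta}{2}\bigr) \lesssim e^{-cN\delta^2},
\end{equation*}
and the finitely many $N < N_0(\delta)$ cases are absorbed into the implicit constant via the trivial bound $1 \leq e^{cN_0\delta^2}\cdot e^{-cN\delta^2}$.

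The main obstacle is the upgrade in the middle paragraph: Proposition~\ref{prop:(Subag)-band}(1) controls only the distribution of $Z_N(\widetilde A_N)/\mathbb{E}_u Z_N(\widetilde A_N)$, so quantifying the conditional mean requires a uniform-in-$u$ lower-tail estimate on $F_N(\widetilde A_N)$ that does not follow immediately from Subag's result. Once this technical point is handled via the volume lower bound on $\widetilde A_N$ and the Gaussian max-bound of Lemma~\ref{lem:(Auffinger-Ben-Arous)}, the remaining reduction and final application of Gaussian concentration via Lemma~\ref{lem:fe-conc} are mechanical.
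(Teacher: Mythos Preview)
Your proposal is correct and follows the same three-step skeleton as the paper: reduce to the microscopic band $\tilde A=\mbox{Band}(x,q_*,\eta N^{-1/2})$ via monotonicity, show $\E_u F_N(\tilde A)\to\Lambda$ uniformly over $u\in J_N$, and then invoke the Gaussian concentration of Lemma~\ref{lem:fe-conc}. The only substantive difference is in how the middle step is carried out. The paper bounds $\E_u[F_N(\tilde A)^2]$ by first bounding $\E_u F_N(\tilde A)$ from above via Jensen and the pointwise variance estimate $\mbox{Var}_u(H(\sigma))\lesssim N$ from Lemma~\ref{lem:H-reform}, and then feeds this $L^2$ bound into a Cauchy--Schwarz/interval argument against the limit law of Proposition~\ref{prop:(Subag)-band}(1) to conclude $|\E_u F_N(\tilde A)-\tfrac1N\log\E_u Z_N(\tilde A)|\to 0$. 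You instead argue uniform integrability from the crude envelope $|F_N(\tilde A)|\le |\tfrac1N\log V(\tilde A)|+\beta\sup|H|/N$. This is fine, but note that Lemma~\ref{lem:(Auffinger-Ben-Arous)} as stated concerns $H$ under $\prob$, not $Y_N$ under $\prob_u$; you really need $\E[\sup|Y_N|]\lesssim N$, which is not literally recorded in Lemma~\ref{lem:H-reform} (that lemma only asserts finiteness of the max) though it follows by the same Dudley--Borell reasoning. The paper's route avoids this extra input by using only the pointwise variance of $H$ under $\prob_u$, which \emph{is} stated there.
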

\begin{proof}
Fix $x\in\cS^{N}$ and define $\tilde{A}(x)=\mbox{Band}(x,q_{*},\eta N^{-1/2})$.
Observe that because $\tilde{A}\subset A$, we have $F(\tilde{A})\leq F(A)$,
from which it follows that

\begin{equation}
\limsup_{N\to\infty}F_{N}(A(x))-\Lambda(p,\beta)\geq\limsup_{N\to\infty}F_{N}(\tilde{A}(x))-\Lambda(p,\beta)\,.\label{eq:fe-fe-tilde}
\end{equation}
Define the set 
\[
V=\left[\frac{1}{N}\log\E_{u}[Z_{N}(\tilde{A}(x))]-\frac{K}{N},\frac{1}{N}\log\mathbb{\E}_{u}[Z_{N}(\tilde{A}(x))]+\frac{K}{N}\right].
\]
By item (1) of \prettyref{prop:(Subag)-band}, with the choice $\epsilon_{N}=\eta N^{-1/2}$,
and the Gaussian tails of $Y_{*}$ (defined there), there is an absolute
constant, $c>0$ such that for any $K$ sufficiently large,

\begin{equation}
\sup_{u\in J_{N}}\mathbb{\prob}_{u}\left(F_{N}(\tilde{A}(x))\in V^{c}\right)\leq\exp(-cK^{2})+o(1)\,.\label{eq:log-of-expectation}
\end{equation}
With these results in hand, observe that

\begin{align*}
\E_{u}[F_{N}(\tilde{A}(x))]& = \E_{u}[F_{N}(\tilde{A}(x))\boldsymbol{1}\{F_{N}(\tilde{A}(x))\in V\}]+\E_{u}[F_{N}(\tilde{A}(x))\boldsymbol{1}\{F_{N}(\tilde{A}(x))\in V^{c}\}]\,.
\end{align*}
Combining this with \prettyref{eq:log-of-expectation} and the Cauchy-Schwarz
inequality, we have that for each fixed $K$ large enough, for every
$u\in J_{N}$, 
\begin{align*}
\left|\E_{u}[F_{N}(\tilde{A}(x))]-\frac{1}{N}\log\mathbb{\E}_{u}[Z_{N,\beta}(\tilde{A}(x))]\right| & \leq\left(\E_{u}\left[\left(F_{N}(\tilde{A}(x))\right)^{2}\right]\right)^{\frac{1}{2}}\left(\exp(-cK^{2})+o(1)\right)^{\frac{1}{2}}+o(1).
\end{align*}

We estimate the right hand side as follows. Splitting up the expectation, and using Eq.~\eqref{eq:fe-concentration}, we obtain for every $u\in J_N$,
\begin{align*}
\E_{u}\left[F_{N}(\tilde{A}(x))^{2}\right]\leq & \E_{u}\left[F_{N}(\tilde{A}(x))^{2}\left(\boldsymbol{1}\{F_{N}(\tilde{A}(x))\leq\E_{u}F_{N}(\tilde{A}(x))\}+\boldsymbol{1}\{F_{N}(\tilde{A}(x))>\E_{u}F_{N}(\tilde{A}(x))\}\right)\right]\, \\
\leq & \left(\E_{u}[F_{N}(\tilde{A}(x))]\right)^{2}+\sup_{u\in J_{N}}\int_{0}^{\infty} 2(\mathbb E_u F_N(\tilde A(x)) + t) \cdot e^{-cNt^{2}}dt\,.
\end{align*}
We now bound $\E_{u} [F_{N}(\tilde{A}(x))]$ uniformly in $u\in J_{N}$:
letting $\mbox{Var}_{u}$ denote the variance with respect to ${\mathbb{P}}_{u}\color{black}$,
we have that 
\[
\E_{u}[F_{N}(\tilde{A}(x))]\leq\frac{1}{N}\log\int_{\mathcal{S}^{N}}\E_{u}[e^{-\beta H(\sigma)}]dV(\sigma)\leq\frac{\beta u}{N}+\frac{1}{N}\sup_{\sigma\in\mathcal{S}^{N}}\frac{\beta^{2}}{2}\mbox{Var}_{u}(H(\sigma))\,,
\]
where we use Jensen inequality for the first inequality, and the $\mathbb{P}_{u}$
conditional distribution of $H(\sigma)$ given by \prettyref{lem:H-reform}
for the second.

Recall now, from the definition of $J_{N}$, that $\frac{u}{N}$ is
bounded by some constant that depends only on $p$. Combining the
above with the covariance bound obtained in \prettyref{lem:H-reform}
(independent of $u$) to bound $\mbox{Var}_u (H(\sigma)) \lesssim_p N$, we see that 
\begin{align*}
\sup_{u\in J_{N}}\left(\E_{u}\left[\left(F_{N}(\tilde{A}(x))\right)^{2}\right]\right)^{\frac{1}{2}} & \lesssim_{p,\beta}1+o(1)\,.
\end{align*}

Altogether, we see that 
\begin{align}
\sup_{u\in J_{N}}\abs{\E_{u}[F_{N}(\tilde{A}(x))]-\frac{1}{N}\log\E_{u}[Z_{N}(\tilde{A}(x))]}& \lesssim_{p,\beta} e^{-cK^{2}}+o(1)\,.\label{eq:switch-log-expectation}
\end{align}
Although the $o(1)$ term is not uniform in $K$, for any $\delta>0$,
there exists a $K$ such that for $N$ sufficiently large the above
difference is less than $\delta/6.$ Moreover, by item (2) of \prettyref{prop:(Subag)-band},
\begin{align}
\lim_{N\to\infty}\sup_{u\in J_{N}}|\frac{1}{N}\log\E_{u}[Z_{N}(\tilde{A}(x))]-\Lambda(p,\beta)|= & 0\,,\label{eq:fe-like-lambda}
\end{align}
so for any $\delta>0$, for $N$ sufficiently large the difference
in \prettyref{eq:fe-like-lambda} is less than $\delta/6$. By \prettyref{eq:switch-log-expectation}
and the finiteness of $\Lambda(p,\beta)$, for every $\delta>0$,
there exists $K$ large enough that for all $N$ sufficiently large,
\begin{align*}
\sup_{u\in J_{N}}|\E_{u}[F_{N}(\tilde{A}(x))]-\Lambda(p,\beta)|\leq & \sup_{u\in J_{N}}|\mathbb{\E}_{u}[F_{N}(\tilde{A}(x))]-\frac{1}{N}\log\mathbb{\E}_{u}[Z_{N}(\tilde{A}(x))]|\,.\\
 & +\sup_{u\in J_{N}}|\frac{1}{N}\log\mathbb{\E}_{u}[Z_{N,\beta}(\tilde{A}(x))]-\Lambda(p,\beta)|\\
< & \delta/3\,.
\end{align*}
Then by the triangle inequality and \prettyref{lem:fe-conc}, for
all such $N$, 
\begin{align*}
\sup_{u\in J_{N}}\mathbb{\prob}_{u}(|F_{N}(\tilde{A}(x))-\Lambda(p,\beta)|>\delta)\leq & \sup_{u\in J_{N}}\prob_{u}(|F_{N}(\tilde{A}(x))-\E_{u}[F_{N}(\tilde{A}(x))]|>\delta/3)\\
\lesssim & e^{-cN\delta^{2}/9}\,.
\end{align*}
Combined with \prettyref{eq:fe-fe-tilde}, and the observation that
every estimate in this proof has been independent of $x\in\mathcal{S}^{N}$,
we obtain for every $\delta>0$,
\[
\sup_{x\in\mathcal{S}^{N}}\sup_{u\in J_{N}}\prob_{u}\left(F_{N}(A(x))<\Lambda(p,\beta)-\delta\right)\lesssim e^{-cN \delta^2/9}\,.\qedhere
\]

\end{proof}
Now that we know that $F_N(A)$ is large with high probability, we want
the corresponding estimate to show that the probability that $F_N(B)$
is larger than $\Lambda-\delta$ (for $\delta$ small enough) is small.
\begin{lem}
\label{lem:free-energy-B}Let $a_{N}$ and $J_{N}$ be as in \prettyref{prop:(Subag)-band}
and $\eta$ and $B(x)$ be as in \prettyref{prop:cap-free-energy}.
There exists $c(\beta,p)>0$ such that for every $\delta>0$ sufficiently small, 
\[
\sup_{x\in\cS^{N}}\sup_{u\in J_{N}}\prob_{u}\left(F_{N}(B(x))>\Lambda(p,\beta)-\delta\right)\lesssim e^{-cN\delta^2}\,.
\]
\end{lem}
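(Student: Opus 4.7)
The plan is to apply Markov's inequality to $Z_N(B(x))$ and to use \prettyref{eq:subag-2b} of \prettyref{prop:(Subag)-band} as essentially the only non-trivial input. With the choice $\epsilon=\eta$ in \prettyref{eq:subag-2b} one has exactly $B(x)=\mathrm{Cap}(x,q_{**})\setminus\mathrm{Band}(x,q_{*},\eta)$, so that proposition supplies a $\delta_{0}=\delta_{0}(p,\beta)>0$ for which
\[
\limsup_{N\to\infty}\sup_{u\in J_{N}}\frac{1}{N}\log\E_{u}[Z_{N}(B(x))]\;\leq\;\Lambda(p,\beta)-2\delta_{0}.
\]
By the isotropy of the conditional law $\prob_{u}$ noted in \prettyref{lem:H-reform}, the quantity $\E_{u}[Z_{N}(B(x))]$ is independent of $x$, so this upper bound is automatically uniform in $x\in\mathcal{S}^{N}$ as well.

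With this in hand, the proof is a one-line Markov computation: for any $\delta>0$,
\[
\prob_{u}\bigl(F_{N}(B(x))>\Lambda-\delta\bigr)=\prob_{u}\bigl(Z_{N}(B(x))>e^{N(\Lambda-\delta)}\bigr)\;\leq\;e^{-N(\Lambda-\delta)}\E_{u}[Z_{N}(B(x))]\;\leq\;e^{-N(2\delta_{0}-\delta)}
\]
for all $N$ sufficiently large, uniformly in $x\in\mathcal{S}^{N}$ and $u\in J_{N}$. For $\delta\leq\delta_{0}$ this is bounded by $e^{-N\delta_{0}}$, which in turn is bounded by $e^{-cN\delta^{2}}$ (with $c=1$, say) as soon as $\delta\leq\sqrt{\delta_{0}}$, yielding the stated estimate.

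Unlike in the proof of \prettyref{lem:free-energy-A}, we need neither item (1) of \prettyref{prop:(Subag)-band} nor the delicate Cauchy--Schwarz argument used there to swap $\E_{u}\log$ with $\log\E_{u}$: only a one-sided tail is required, and applying Markov's inequality directly to $Z_{N}(B(x))$ avoids passing through the logarithm altogether. The only real ``obstacle'' is to observe that the strict separation $\Lambda-2\delta_{0}<\Lambda$ in \prettyref{eq:subag-2b} is uniform over $x\in\mathcal{S}^{N}$ (by isotropy) and over $u\in J_{N}$ (by hypothesis), so that the resulting Markov bound is itself uniform. One could equivalently first use Jensen to bound $\E_{u}F_{N}(B(x))\leq\Lambda-2\delta_{0}$ and then invoke the concentration estimate \prettyref{lem:fe-conc}, but the direct Markov route is shorter.
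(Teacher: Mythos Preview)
Your proof is correct. The paper's own argument is precisely the alternative you sketch in your last sentence: it first applies Jensen to obtain $\E_u F_N(B(x))\leq \tfrac1N\log\E_u Z_N(B(x))\leq \Lambda-2\delta$ via \eqref{eq:subag-2b}, and then invokes the Gaussian concentration of $F_N(B(x))$ under $\prob_u$ from \prettyref{lem:fe-conc} to convert this mean bound into the stated tail estimate.

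Your direct Markov route on $Z_N(B(x))$ is a legitimate shortcut: since only an upper tail is needed, you avoid both Jensen and the concentration lemma and land on a fixed exponential rate $e^{-N\delta_0}$, which you then harmlessly repackage as $e^{-cN\delta^2}$ for $\delta$ in the allowed range. The paper's route has the cosmetic advantage that the $\delta^2$ in the exponent is the genuine Gaussian concentration rate rather than a relabeling, but for the sole downstream use of this lemma in \prettyref{prop:cap-free-energy} the two arguments are interchangeable.
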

\begin{proof}
For any $x\in\mathcal{S}^{N}$. By 
Jensen's inequality, and item (2) of \prettyref{prop:(Subag)-band}
(combined with the rotational invariance of $H$ which implies that
the estimate is uniform over $\mathcal{S}^{N}$), there exists a $\delta>0$
such that, 
\begin{align}
\limsup_{N\to\infty}\sup_{x\in\mathcal{S}^{N}}\sup_{u\in J_{N}}\E_{u}[F_{N}(B(x))] & =\limsup_{N\to\infty}\sup_{u\in J_{N}}\E_{u}[F_{N}(B(x))]\nonumber \\
 & \leq\limsup_{N\to\infty}\sup_{u\in J_{N}}\frac{1}{N}\log\E_{u}[Z_{N,\beta}(B(x))]\nonumber \\
 & \leq \Lambda(p,\beta)-3\delta\,.\label{eq:fe-b-lambda}
\end{align}
Thus for some sufficiently large $N$, the left hand side is less
than $\Lambda(p,\beta)-2\delta$. Combined with the concentration
of the free energy under $\mathbb{P}_{u}$ given by \prettyref{eq:fe-concentration},
we see that there exists a constant $c(\beta,p)>0$ such that for sufficiently large $N$, 
\begin{align}
\sup_{x\in\mathcal{S}^{N}}\sup_{u\in J_{N}}\prob_{u}\left(F_{N}(B(x))>\Lambda(p,\beta)-\delta\right)\leq & \sup_{x\in\mathcal{S}^{N}}\sup_{u\in J_{N}}\mathbb{\prob}_{u}\left(|F_{N}(B(x))-{\mathbb{E}}_{u}F_{N}(B(x))|>\delta\right)\nonumber \\
\lesssim & e^{-cN\delta^{2}}\,,\label{eq:fe-b}
\end{align}
as desired.
\end{proof}

In order to complete the proof of \prettyref{prop:cap-free-energy}, it remains to move from free energy differences under $\mathbb P_u$ for $u\in J_N$ to free energies under $\mathbb P$ around the (random) point $x_k$. 

For this, we 
will need the following result which is a standard application of the Kac--Rice formula combined with \prettyref{prop:(Subag-Zeitouni)} (see \cite[Lemma 38]{SubGibbs16}).
Recall that in order to apply the Kac--Rice formula, one needs some basic smoothness
criteria, called \emph{tameness}. More precisely, a random field $G$ is \emph{tame} 
if it satisfies criteria, (a)--(g) in Theorem 12.1.1 of~\cite{AdlerTaylor} and
the random field $(H(x),G(x))_x$ is stationary random field. In~\cite{SubGibbs16}, this was applied in the case where $G(x)$ is the restricted free energy of some set around $x$, using that such free energies are tame.

\begin{lem}[Lemma 38 of~\cite{SubGibbs16}]\label{lem:kac-rice-application}
Let $G$ be tame, let $J_N = (m_N -a_N, m_N +a_N)$ for $a_N =o(N)$ and define $\mathscr C(J_N)=\{\sigma:\nabla H\restriction_\sigma =0, H(\sigma) \in J_N\}$. If $D_N$ is an interval, there exist constants $C,c_p>0$ given by~\emph{\cite[Eq.~(2.8)]{SubGibbs16}} such that for every $x\in \mathcal S^N$,
\begin{align*}
\mathbb E\Big[ \sum_{\sigma \in \mathscr C(J_N)}\boldsymbol 1\{G(\sigma)\in D_N\}\Big] \leq C\int_{J_N} e^{c_p(u-m_N)} \big[\mathbb P_u (G(x))\big]^{1/2} du\,.
\end{align*}
\end{lem}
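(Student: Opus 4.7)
The plan is to apply the Kac--Rice formula for the expected number of critical points of $H$ with constrained height and constrained value of $G$, then invoke Cauchy--Schwarz to factor out $\mathbb{P}_u(G(x)\in D_N)^{1/2}$, and finally absorb the remaining Kac--Rice density into the exponential factor $C e^{c_p(u-m_N)}$ using known asymptotic estimates from \cite{ABC13,SubGibbs16}.

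To start, since $H$ is a smooth Gaussian field with non-degenerate joint law of $(H,\nabla H,\mathrm{Hess}\,H)$, and $G$ is assumed tame so that $(H,G)$ is a stationary tame field, the hypotheses of \cite[Theorem 12.1.1]{AdlerTaylor} are met and the Kac--Rice formula gives
\begin{align*}
\mathbb E\bigg[\sum_{\sigma \in \mathscr C(J_N)} \boldsymbol 1\{G(\sigma) \in D_N\}\bigg] = \int_{\mathcal S^N}\!\!\int_{J_N}\! \varphi_\sigma(u,0)\,\mathbb E\big[\abs{\det \mathrm{Hess}\,H(\sigma)}\,\boldsymbol 1\{G(\sigma)\in D_N\} \,\big|\, H(\sigma)\!=\!u,\nabla H(\sigma)\!=\!0\big]\,du\, dV(\sigma),
\end{align*}
where $\varphi_\sigma(u,0)$ is the joint density of $(H(\sigma),\nabla H(\sigma))$ at $(u,0)$. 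Stationarity of $(H,G)$ together with the isotropy of $H$ makes this integrand independent of $\sigma$, so the outer integral collapses to $\mathrm{Vol}(\mathcal S^N)$ times its value at any fixed $x\in\mathcal S^N$.

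Next, Cauchy--Schwarz applied to the inner conditional expectation yields
\begin{align*}
\mathbb E_u\big[\abs{\det \mathrm{Hess}\,H(x)}\,\boldsymbol 1\{G(x) \in D_N\}\big] \leq \mathbb E_u\big[(\det \mathrm{Hess}\,H(x))^2\big]^{1/2}\cdot \mathbb P_u\big(G(x)\in D_N\big)^{1/2},
\end{align*}
which produces precisely the square-root of $\mathbb P_u(G(x)\in D_N)$ appearing on the right-hand side of the claim. What remains is to bound the prefactor $\mathrm{Vol}(\mathcal S^N)\,\varphi_x(u,0)\,\mathbb E_u[(\det \mathrm{Hess}\,H(x))^2]^{1/2}$ by $Ce^{c_p(u-m_N)}$ uniformly for $u\in J_N$. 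This is exactly the content of \cite[Eq.~(2.8)]{SubGibbs16}: after the conditioning in $\mathbb P_u$, the Hessian of $H$ at $x$ reduces (up to a deterministic shift) to a scaled GOE matrix, and the second moment of its determinant is computed using classical random-matrix asymptotics; combined with the Gaussian density $\varphi_x(u,0)$ and the volume factor, this is just the exponential part of the Kac--Rice intensity and corresponds to the Taylor expansion of the complexity $N\Theta_p(u/N)$ around the near-ground-state values $u\sim m_N$, whose linear coefficient is exactly $\Theta_p'(-E_0)=c_p$.

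The only nontrivial step is the uniform asymptotic on the Kac--Rice prefactor, which is the key ingredient one must extract from \cite{ABC13,SubGibbs16}; the rest of the argument amounts to a textbook application of Kac--Rice and Cauchy--Schwarz. Accordingly, in the body of the paper we cite Lemma 38 of \cite{SubGibbs16} as a black box and merely record the resulting inequality for later use.
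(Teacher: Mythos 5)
Your proposal is correct and follows essentially the same route as the cited source: the paper does not prove this lemma itself but quotes it as Lemma 38 of \cite{SubGibbs16}, whose proof is exactly the Kac--Rice formula with the height and the value of $G$ as marks, Cauchy--Schwarz on the conditional expectation to extract $\big[\mathbb P_u(G(x)\in D_N)\big]^{1/2}$ (note the statement's $\mathbb P_u(G(x))$ is shorthand for this), and the density/determinant estimate of \cite[Eq.~(2.8)]{SubGibbs16} to absorb the Kac--Rice prefactor into $Ce^{c_p(u-m_N)}$. The only small caution is your parenthetical identification $c_p=\Theta_p'(-E_0)$: since Cauchy--Schwarz replaces $\mathbb E_u[|\det\mathrm{Hess}\,H|]$ by $\mathbb E_u[(\det\mathrm{Hess}\,H)^2]^{1/2}$, the constant should simply be taken as the one furnished by Eq.~(2.8) rather than asserted to equal the complexity derivative exactly.
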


\begin{proof}[{\textbf{\emph{Proof of \prettyref{prop:cap-free-energy}}}}]
\textbf{\emph{ }}For each $x\in\cS^{N}$, $\delta>0$, define the event
\[
E(x,\delta)=\{F_{N}(B(x))-F_{N}(A(x))\leq -\delta\}\,.
\]
We begin by finding a $\delta>0$ for which
\begin{equation}
\sup_{x\in \mathcal S^N} \sup_{u\in J_{N}}\prob_{u}\left(E^c(x,\delta)\right)\lesssim e^{-cN\delta^2}\,,\label{eq:want-to-show}
\end{equation}
for some $c(\beta,p)>0$. 
With this goal in mind, observe that for any $\delta>0$, a union
bound gives 
\begin{align*}
\sup_{x\in\mathcal{S}^{N}}\sup_{u\in J_{N}}\prob_{u}(F_{N}(A(x))-F_{N}(B(x))<\delta)\leq & \sup_{x\in\mathcal{S}^{N}}\sup_{u\in J_{N}}\prob_{u}(F_{N}(A(x))<\Lambda(p,\beta)-\delta)\\
 & +\sup_{x\in\mathcal{S}^{N}}\sup_{u\in J_{N}}\prob_{u}(F_{N}(B(x))>\Lambda(p,\beta)-2\delta)
\end{align*}
whence using the sufficiently small $\delta>0$ given by \prettyref{lem:free-energy-B},
combining Lemmas~\ref{lem:free-energy-A}--\ref{lem:free-energy-B} with the definition of $E(x,\delta)$ yields the desired~\eqref{eq:want-to-show}.
In order to conclude the proof, we recall that $J_N = (m_N - a_N, m_N + a_N)$ for $a_N =o(N)$ and $\mathscr C(J_N)=\{\sigma:\nabla H\restriction_\sigma =0, H(\sigma)\in J_N\}$. By Markov's inequality, we bound the quantity,
\begin{align*}
\mathbb P(\exists \sigma \in \mathscr C(J_N): E^c(\sigma,\delta) \mbox{ holds}) \leq \mathbb E \Big[\sum_{\sigma\in \mathscr C(J_N)} \boldsymbol1\{E^c(\sigma,\delta)\}\Big]\,.
\end{align*}
Taking $G(x)= F_N(B(x))-F_N(A(x))$ and $D_N = (-\delta,\infty)$ in~\prettyref{lem:kac-rice-application}, and noting that these restricted free energies are tame, so that $G$ is tame, 
we obtain for $c(\beta,p)>0$,
\begin{align*}
\mathbb P(\exists \sigma \in \mathscr C(J_N): E^c (\sigma,\delta) \mbox{ holds}) & \leq 2C e^{c_p a_N}\sup_{u \in J_N} \sqrt{\mathbb P_u[E^c(x,\delta)]} \\
& \lesssim e^{c_pa_N - cN\delta^2/2}\,,
\end{align*}
which is exponentially small in $N$ since $a_N =o(N)$. Specifically, for any fixed $k$, we have $\mathbb P(x_k \in \mathscr C(J_N), E^c(x_k,\delta))=o(1)$ while by \prettyref{cor:loc-of-minima}, for any fixed $k$, 
\[\lim_{N\to\infty} \mathbb P(x_k \in \mathscr C(J_N))= \lim_{N\to\infty} \mathbb P(H(x_k) \in J_N) =1\,,
\]
so that by a union bound, 
\[\mathbb P(E^c(x_*,\delta)) \leq \mathbb P(x_* \notin \mathscr C(J_N)) +\mathbb P(x_* \in \mathscr C(J_N), E^c(x_*,\delta))=o(1)\,. \qedhere 
\]
\end{proof}

\subsection{Ratios of Gibbs Weights Near Local Minima}

Now that we have the free energy control from \prettyref{prop:cap-free-energy},
we can control ratios of certain Gibbs probabilities. The corollaries
capture the specific application of these estimates that we will need
in the subsequent. 

Recall the notation $x_{\pm1},\ldots$, regarding the lowest critical
points from \prettyref{sub:Previous-results}. Following this convention,
for any $x_{k}$, we define the following subsets of $\mathcal{S}^{N}$: \color{black}
\begin{align}
A_{k} & = \mbox{Cap}(x_{k},q_{**}+\epsilon N^{-1/2})=\{\sigma\in\mathcal{S}^{N}:R(\sigma,x_{k})>q_{**}+\epsilon N^{-1/2}\}\nonumber \\
B_{k}& =\mbox{Band}(x_{k},q_{**}+\tfrac{\epsilon}{2}N^{-1/2},\tfrac{\epsilon}{2}N^{-1/2})=\{\sigma\in\mathcal{S}^{N}:R\left(\sigma,x_{k}\right)\in[q_{**},q_{**}+\epsilon N^{-1/2}]\} \nonumber\\
B_{k}^{*}& = \mbox{Band}(x_{k},q_{\ast},\epsilon)=\{\sigma\in\mathcal{S}^{N}:R\left(\sigma,x_{k}\right)\in[q_{\ast}-\epsilon,q_{*}+\epsilon]\} \label{eq:A-B-def} 
\end{align}
for some sufficiently small $\epsilon=O(1)$ chosen such that $2\epsilon<q_{*}-q_{**}$
(such a choice of $\epsilon>0$ exists since $q_{*}>q_{**}$). The first estimate shows that the ratio
of Gibbs probabilities is exponential in $N$. 
\begin{cor}
\label{cor:sets-decay}For every $p\geq3$, there exists some $\beta_{0}(p)$
such that for all $\beta>\beta_{0}$, there exist $c_{1}(p,\beta),c_{2}(p,\beta)>0$
such that for any fixed $k$, with $\mathbb{P}$-probability going
to $1$ as $N\to\infty$, 
\begin{align}
\pi(B_{k})\pi(A_{k})^{-1}\leq & c_{1}\exp(-c_{2}N)\,,\label{eq:cap-to-band-ratio}
\end{align}
and with $\mathbb P$-probability going to $1$ as $N\to\infty$, $\pi(B_{k})\leq c_{1}\exp(-c_{2}N)$.\end{cor}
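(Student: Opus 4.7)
The strategy is to reduce \prettyref{cor:sets-decay} to \prettyref{prop:cap-free-energy} by elementary set-theoretic inclusions, applied with the parameter $\eta$ of that proposition equal to $\epsilon$, so that the ``favorable band'' $A(x)$ of \prettyref{prop:cap-free-energy} coincides with $B_k^*$ in the notation of (\ref{eq:A-B-def}).

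First I would verify the two inclusions $B_k^* \subseteq A_k$ and $B_k \subseteq B(x_k)$, where $B(x_k) := \mbox{Cap}(x_k,q_{**}) \setminus B_k^*$ is as in \prettyref{prop:cap-free-energy}. Both follow from the inequality $2\epsilon < q_* - q_{**}$ built into (\ref{eq:A-B-def}). Indeed, any $\sigma \in B_k^*$ satisfies $R(\sigma,x_k) \geq q_* - \epsilon > q_{**} + \epsilon \geq q_{**} + \epsilon N^{-1/2}$, placing it in $A_k$; and any $\sigma \in B_k$ satisfies $R(\sigma,x_k) \in [q_{**},q_{**}+\epsilon N^{-1/2}] \subseteq [q_{**}, q_*-\epsilon)$ for every $N \geq 1$, placing it in $\mbox{Cap}(x_k,q_{**})$ but outside $B_k^*$, i.e., inside $B(x_k)$.

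Positivity of $e^{-\beta H}$ then turns these inclusions into the partition-function comparisons $Z_N(B_k) \leq Z_N(B(x_k))$ and $Z_N(A_k) \geq Z_N(B_k^*)$. Consequently,
\[
\frac{\pi(B_k)}{\pi(A_k)} = \frac{Z_N(B_k)}{Z_N(A_k)} \leq \frac{Z_N(B(x_k))}{Z_N(B_k^*)} = \exp\!\Bigl(N\bigl(F_N(B(x_k)) - F_N(B_k^*)\bigr)\Bigr),
\]
and applying \prettyref{prop:cap-free-energy} with $x_* = x_k$ and $\eta = \epsilon$ shows that the exponent on the right is at most $-cN$ with $\mathbb{P}$-probability tending to $1$. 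This yields (\ref{eq:cap-to-band-ratio}) with $c_1 = 1$ and $c_2 = c$; the second assertion $\pi(B_k) \leq c_1 \exp(-c_2 N)$ is then immediate from (\ref{eq:cap-to-band-ratio}) combined with the trivial bound $\pi(A_k) \leq 1$.

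In short, all the probabilistic content is already packaged into \prettyref{prop:cap-free-energy}; the corollary is a purely set-theoretic repackaging, and I do not anticipate any substantive obstacle beyond the careful checking of the inclusions above.
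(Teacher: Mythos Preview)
Your proposal is correct and follows essentially the same argument as the paper: both proofs use the inclusions $B_k^*\subset A_k$ and $B_k\subset \mbox{Cap}(x_k,q_{**})\setminus B_k^*$ (which you verify more carefully than the paper does), convert these to the free-energy inequality $F_N(B_k)-F_N(A_k)\le F_N(B(x_k))-F_N(B_k^*)$, and invoke \prettyref{prop:cap-free-energy} with $\eta=\epsilon$. Your derivation of the second assertion from the first via $\pi(A_k)\le 1$ is exactly what is intended.
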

\begin{proof}
The estimate is a direct consequence of \prettyref{prop:cap-free-energy}
for the corresponding choice of $k$ and the choice $\eta=\epsilon$.
To see this, first observe that $H$ is smooth so that $\pi$ is absolutely
continuous with respect to $dV$ and we do not need to worry about
the mass of the boundaries of the sets $A_{k},B_{k}$. 
Observe that the above ratio can be
understood as a free energy difference:
\begin{align*}
\frac{1}{N}\log(\pi(B_{k})\pi(A_{k})^{-1})= & \frac{1}{N}\log\left(\frac{Z_{N}(B_{k})}{Z_{N}(A_{k})}\right)=F_{N}(B_{k})-F_{N}(A_{k})\,.
\end{align*}

Since $B_{k}^{*}\subset A_{k}$, we have that $\pi(A_{k})\geq\pi(B_{k}^{*})$;
moreover, since $B_{k}\subset \mbox{Cap}(x_k,q_{**})\backslash B_{k}^{*}$,
we have that $\pi(B_{k})\leq\pi(\mbox{Cap}(x_k,q_{**})\backslash B_{k}^{*})$.
With these observations in hand, we see that \prettyref{prop:cap-free-energy}
implies that for all $\beta\geq\beta_{0}$, where $\beta_{0}(p)$
is given by \prettyref{prop:cap-free-energy}, 
\begin{align*}
\pi(B_{k})\pi(A_{k})^{-1}\leq & \pi(\mbox{Cap}(x_k,q_{**})\backslash B_{k}^{*})\pi(B_{k}^{*})^{-1}\\
\leq & \exp\left[N(F_{N}(\mbox{Cap}(x_k,q_{**})\backslash B_{k}^{*})-F_{N}(B_{k}^{*}))\right]\\
\leq & c_{1}\exp(-c_{2}N)\,,
\end{align*}
for some $c_1(p,\beta), c_2(p,\beta)>0$ with $\mathbb{P}$-probability going to $1$ as $N\to\infty$.\end{proof}
\begin{cor}
\label{cor:pigeonhole} For $k\in\{1,2,3\}$, the sets $A_{k},B_{k}$
defined in \prettyref{eq:A-B-def}, satisfy 
\begin{align*}
\lim_{N\to\infty}\prob(\exists k\in\{1,2,3\}:\pi((A_{k}\cup B_{k})^{c})\geq & \frac{1}{2})=1.
\end{align*}
\end{cor}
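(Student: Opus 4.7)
My plan is to reduce the claim to a pairwise disjointness statement about the caps $C_{k}:=A_{k}\cup B_{k}=\{\sigma\in\cS^{N}:R(\sigma,x_{k})\geq q_{**}\}$, and then invoke a two-point Kac--Rice argument to verify it. Suppose towards contradiction that on an event of non-vanishing $\prob$-probability, $\pi(C_{k})>1/2$ for every $k\in\{1,2,3\}$. If for some $i\neq j$ the caps $C_{i}$ and $C_{j}$ were disjoint, then $1\geq\pi(C_{i})+\pi(C_{j})>1$, a contradiction. So it suffices to show that, with $\prob$-probability tending to $1$, at least one pair of caps among the three is disjoint.

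The next ingredient is purely geometric: I would observe that $C_{i}\cap C_{j}=\emptyset$ whenever $R(x_{i},x_{j})<2q_{**}^{2}-1$. Indeed, any $\sigma\in C_{i}\cap C_{j}$ satisfies $R(\sigma,x_{i})+R(\sigma,x_{j})\geq2q_{**}$, while Cauchy--Schwarz applied to the unit vectors $\sigma/\sqrt{N},x_{i}/\sqrt{N},x_{j}/\sqrt{N}\in\R^{N}$ gives
\[
R(\sigma,x_{i})+R(\sigma,x_{j})=\frac{\sigma}{\sqrt{N}}\cdot\Big(\frac{x_{i}+x_{j}}{\sqrt{N}}\Big)\leq\sqrt{2+2R(x_{i},x_{j})}\,,
\]
so that $4q_{**}^{2}\leq2+2R(x_{i},x_{j})$, i.e., $R(x_{i},x_{j})\geq2q_{**}^{2}-1$.

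The bulk of the work is then the overlap step: to show that, with $\prob$-probability tending to $1$, some pair $i\neq j\in\{1,2,3\}$ satisfies $R(x_{i},x_{j})<2q_{**}^{2}-1$. The natural route is a two-point extension of \prettyref{lem:kac-rice-application}: bound the first moment of the number of pairs of critical points $(\sigma,\sigma')$ of $H$ with both energies in $J_{N}$ and overlap $R(\sigma,\sigma')\geq2q_{**}^{2}-1$, and show this expectation is $o(1)$. Combined with \prettyref{cor:loc-of-minima}, which guarantees $(H(x_{k}))_{k=1}^{3}\in J_{N}^{3}$ with probability tending to $1$, Markov's inequality and a union bound then imply that with high $\prob$-probability \emph{all} three pairwise overlaps among $x_{1},x_{2},x_{3}$ are strictly below $2q_{**}^{2}-1$, whence the $C_{k}$ are pairwise disjoint and the pigeonhole argument above yields the contradiction.

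I expect this two-point Kac--Rice computation to be the principal obstacle: it requires the joint density of two critical points at specified overlap together with control of the determinants of the correspondingly conditioned Hessians, both of which are computations beyond what the one-point lemma in the excerpt directly provides. Once this overlap estimate is in hand, however, the pigeonhole step is immediate and closes the proof.
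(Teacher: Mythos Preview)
Your approach matches the paper's exactly: write $A_k\cup B_k=\mbox{Cap}(x_k,q_{**})$, argue the three caps are pairwise disjoint with high probability, and pigeonhole. The only difference is that the paper does not carry out the two-point Kac--Rice computation you flag as the principal obstacle; it simply invokes \cite[Corollary~13]{SubGibbs16}, which states that distinct low-lying critical points (modulo the reflection symmetry for $p$ even) have overlap near zero, and then notes that with the given choice of $q_{**}$ this forces the caps to be disjoint. That corollary is itself proved by essentially the second-moment/two-point Kac--Rice machinery you sketch, so your proposal is correct in spirit but reinvents a result already available in the literature the paper draws on throughout. With that citation in hand the proof collapses to two lines, and your geometric lemma (which is sharp: $R(x_i,x_j)<2q_{**}^2-1$ is exactly the condition that the angular distance exceeds twice the cap radius) together with the pigeonhole step are then all that is needed.
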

\begin{proof}
Each element of $\{x_{k}\}_{i\in\{1,2,3\}}$ has a corresponding $\mbox{Cap}(x_{k},q_{**})$
and with probability going to $1$ as $N\to\infty$, all three of $H(x_i)\in J_N$ so that the three caps
are disjoint by the choice of $q_{**},\epsilon$ and \cite[Cor. 13]{SubGibbs16}.
Then all three $x_{k}$'s having $\pi(\mbox{Cap}(x_{k},q_{**}))\geq\frac{1}{2}$
would contradict $\pi(\mathcal{S}^{N})=1$.
\end{proof}

\section{Spectral Gap Inequalities for Gibbs Measures\label{sec:Spectral-Gap-inequalities}}

Before turning to the proofs of the main results, we take a brief
pause from the above probabilistic considerations and turn to the
main analytical tools. Some of the results from this section are classical.
We restate them for the completeness. We also prove an adaptation
to our setting of a standard bound on the spectral gap. 

The setting of this section is more general than that of other sections.
Let $M$ be a smooth compact boundaryless Riemannian manifold with
metric $g$ and normalized volume measure $dV$. Let $U\in C^{\infty}(M)$.
As before, we define the \emph{Gibbs} measure, $\pi$, by
\[
d\pi(x)=\frac{e^{-\beta U(x)}}{Z}dV(x)
\]
and the associated operator $\mathcal{L}=\frac{1}{2}\Delta-\frac{\beta}{2}g(\nabla U,\nabla)$
with domain $C^{\infty}(M)\subset L^{2}(M)$ where $\nabla=\nabla_{g}$
is again the covariant derivative and $\Delta$ is the corresponding
Laplacian. As $-\cL$ is a uniformly elliptic operator with smooth
and bounded coefficients, its eigenfunctions are $C^{\infty}$ \cite{GilTrud01,EvansPDE}.
Thus by symmetry of $-\cL$ on $C^{\infty}(M)$ with respect to $\pi$,
it is essentially self-adjoint there \cite{LaxFunctional02}. Furthermore, it's domain, $H^1(\pi)$,
is a compact subset of $L^2(\pi)$ so that it has pure point spectrum which we denote by $0=\lambda_{0}\leq\lambda_{1}\leq\ldots$
In particular, it has Markov semi-group $P_{t}=e^{t\cL}$ .

We say that a measure $\mu$ on $M$ satisfies a \emph{Poincar\'e inequality}
with constant $C>0$ if for every $f\in C^{\infty}(M)$, 
\[
\mbox{Var}_{\mu}(f)\leq C\int_{M}g(\nabla f,\nabla f)d\mu\,.
\]
We say a measure $\mu$ satisfies a \emph{log-Sobolev inequality}
with constant $c>0$ if for every $f\in C^{\infty}(M)$, 
\begin{align}\label{eq:log-sobolev-def}
\int_{M}f^{2}\log\left(\frac{f^{2}}{\int_{M}f^{2}d\mu}\right)d\mu\leq2c\int_{M}g(\nabla f,\nabla f)d\mu.
\end{align}

Corresponding to $\pi_{N},$ we define the \emph{Dirichlet form} by
\begin{equation}
\cE(f,h)=\int_{\mathcal{S}^{N}}g(\nabla f,\nabla h)d\pi_{N}\,.\label{eq:dirichlet-form}
\end{equation}
By the Courant-Fischer min-max principle \cite{LaxFunctional02},
the spectral gap $\lambda_{1}$ of the operator $-\mathcal{L}$ is
given by the variational formula 
\begin{align}
\lambda_{1}= & \min_{{f\in C^{\infty},\,
\|\nabla f\|_{L_{\pi}^{2}}\neq0
}
}\frac{\mathcal{E}(f,f)}{\mbox{{Var}}_{\pi}f}\label{eq:variational-form-gap}
\end{align}
As a result, observe that if $\pi$ satisfies a Poincar\'e inequality
with constant $C>0$ then the spectral gap of the corresponding operator,
$-\mathcal{L}$, has $\lambda_{1}\geq\frac{1}{C}$. We also remind
the reader of the following classical fact.
\begin{lem}
\label{lem:LS-implies-PI}If $\mu$ satisfies a log-Sobolev inequality
with constant $c>0$, then it also satisfies a Poincar\'e inequality
with constant $c$. 
\end{lem}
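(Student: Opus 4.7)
The proof is the standard linearization of the log-Sobolev inequality about the constant function. Given $f\in C^\infty(M)$, observe that both sides of a Poincar\'e inequality are unchanged under adding a constant, so I may assume $\int f\, d\mu =0$. For $\epsilon>0$ small, I would apply \eqref{eq:log-sobolev-def} with test function $h_\epsilon = 1+\epsilon f$. Since $\nabla h_\epsilon = \epsilon \nabla f$, the right-hand side is exactly $2c\epsilon^2 \int g(\nabla f,\nabla f)\,d\mu$. For the left-hand side, I would use $\int h_\epsilon^2\,d\mu = 1 + \epsilon^2 \int f^2\,d\mu$ (by the mean-zero assumption) together with the Taylor expansion $\log(1+x) = x - x^2/2 + O(x^3)$ to expand $\log(h_\epsilon^2/\int h_\epsilon^2\,d\mu)$ to order $\epsilon^2$, multiply by $h_\epsilon^2 = 1 + 2\epsilon f + \epsilon^2 f^2$, and integrate term by term against $\mu$.

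A short bookkeeping of the expansion shows that the $O(\epsilon^0)$ contribution is zero, the $O(\epsilon)$ contribution vanishes thanks to $\int f\,d\mu=0$, and the $O(\epsilon^2)$ contribution collects to
\[
\int_M h_\epsilon^2 \log\!\bigg(\frac{h_\epsilon^2}{\int_M h_\epsilon^2\,d\mu}\bigg)d\mu \;=\; 2\epsilon^2 \mbox{Var}_\mu(f) + O(\epsilon^3),
\]
where the remainder is uniform on compact subsets of $C^\infty(M)$ since $M$ is compact and $h_\epsilon$ is bounded away from $0$ for $\epsilon$ small. Combining this with the log-Sobolev inequality applied to $h_\epsilon$, dividing through by $2\epsilon^2$, and letting $\epsilon\downarrow 0$ yields
\[
\mbox{Var}_\mu(f) \;\leq\; c\int_M g(\nabla f,\nabla f)\,d\mu,
\]
which is the desired Poincar\'e inequality with constant $c$. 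There is no genuine obstacle here: the only step requiring care is tracking the cancellations in the Taylor expansion of $h_\epsilon^2 \log(h_\epsilon^2/\int h_\epsilon^2\,d\mu)$, which can be verified directly to produce the coefficient $2$ matching the factor appearing in~\eqref{eq:log-sobolev-def}.
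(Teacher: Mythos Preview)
Your proposal is correct and is precisely the classical linearization argument one finds in the references the paper cites. The paper itself does not supply a proof of this lemma, instead remarking that it is ``very classical'' and pointing to \cite{AGZ10,GuiZeg03}; your argument is exactly what appears there.
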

The proof of this result as well as the following two results is very
classical and can be seen, for example, in \cite{AGZ10,GuiZeg03}.
There are many ways to verify that a Gibbs measure satisfies these
inequalities. The two that we will be using are the following classical
estimates. The first is a stability estimate for Poincar\'e inequalities.
\begin{prop}
\label{prop:poincare-stability-1}(Stability of Poincar\'e Inequalities)
Let $M$ be a Riemannian manifold and suppose that $d\nu=\frac{e^{-U}}{Z}d\mu$
where $Z=\int_{M}e^{-U}d\mu$, $\mu,\nu$ are two probability measures
on $M$, and $U\in C_{b}(M)$. Then if $\mu$ satisfies the Poincar\'e
inequality with constant $C>0$, then $\nu$ satisfies the Poincar\'e
inequality with constant $Ce^{2\beta\left(\max U-\min U\right)}$.
\end{prop}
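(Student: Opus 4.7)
The plan is to prove this via the classical Holley--Stroock perturbation argument, which exploits only the uniform boundedness of the Radon--Nikodym derivative $d\nu/d\mu = e^{-U}/Z$ together with the variational characterization of the variance. I read the bound $Ce^{2\beta(\max U - \min U)}$ as $Ce^{2(\max U - \min U)}$ (the $\beta$ appears to be vestigial notation from the intended application where $U$ plays the role of $\beta H$); the argument below delivers that constant, and the $\beta$-dependent form follows by simply replacing $U$ with $\beta U$.

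First I would fix $f \in C^\infty(M)$ and use the well-known identity
\[
\mathrm{Var}_\nu(f) = \inf_{c \in \mathbb{R}} \int_M (f-c)^2\, d\nu\,.
\]
Plugging in the suboptimal choice $c = \int f\, d\mu$ and writing $d\nu = Z^{-1} e^{-U}\, d\mu$ gives
\[
\mathrm{Var}_\nu(f) \leq \int_M (f - \textstyle\int f\, d\mu)^2 \cdot \frac{e^{-U}}{Z}\, d\mu\,.
\]
Since $U \in C_b(M)$ is bounded, the Radon--Nikodym derivative satisfies $e^{-U}/Z \leq e^{\max U - \min U}$, using the elementary bound $Z = \int e^{-U}d\mu \geq e^{-\max U}$. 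This yields
\[
\mathrm{Var}_\nu(f) \leq e^{\max U - \min U}\, \mathrm{Var}_\mu(f)\,.
\]

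Next I would invoke the Poincar\'e inequality for $\mu$ with constant $C$, obtaining
\[
\mathrm{Var}_\nu(f) \leq C e^{\max U - \min U} \int_M g(\nabla f, \nabla f)\, d\mu\,.
\]
To convert the Dirichlet form on the right-hand side back to one against $\nu$, I would use the reverse relation $d\mu = Z e^U\, d\nu$ together with $Z \leq e^{-\min U}$ to bound $Z e^U \leq e^{\max U - \min U}$ pointwise, giving
\[
\int_M g(\nabla f, \nabla f)\, d\mu \leq e^{\max U - \min U} \int_M g(\nabla f, \nabla f)\, d\nu\,.
\]
Combining the two displays yields the claimed Poincar\'e inequality for $\nu$ with constant $Ce^{2(\max U - \min U)}$.

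There is no real obstacle in this argument, since all the work is in the two elementary bounds on $d\nu/d\mu$ and $d\mu/d\nu$, both of which follow immediately from the boundedness of $U$ and the trivial bounds on $Z$. The slight subtlety, and the only place to be careful, is tracking that the perturbation cost appears \emph{twice}: once in passing from $\mathrm{Var}_\nu$ to $\mathrm{Var}_\mu$, and again in passing back from the Dirichlet form of $\mu$ to that of $\nu$. This is precisely what produces the factor of $2$ in the exponent. Finally, if one prefers the $\beta$-dependent statement as written, one runs the identical argument with $\beta U$ in place of $U$ throughout.
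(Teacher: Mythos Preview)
Your proof is correct and is precisely the classical Holley--Stroock perturbation argument. The paper does not actually supply its own proof of this proposition; it simply remarks that the result is classical and refers the reader to \cite{AGZ10,GuiZeg03}, where one finds exactly the argument you wrote down. Your observation about the stray $\beta$ in the constant is also well taken.
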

The next result is one of the foundational results regarding to Bakry
and Emery's curvature dimension. 
\begin{prop}
\label{prop:(Curvature-Energy-Balance-Lemma)}(Curvature-Energy Balance
theorem, Bakry-Emery) Let $(M,\pi)$ be a Riemannian manifold with
metric tensor $g$ and Gibbs measure $\pi$ corresponding to energy
$U$. Let $Ric$ denote the Ricci tensor on $M$ and let $Hess$ denote
the covariant Hessian operator. If there exists a $c>0$ such that
at every point $\sigma$ in $M$ and every $v\in T_{\sigma}M$, the
inequality 
\[
Ric(v,v)+Hess(U)(v,v)\geq cg(v,v)
\]
holds, then $\pi$ admits a log-Sobolev inequality with constant $c$.
\end{prop}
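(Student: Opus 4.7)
The plan is to invoke the classical Bakry--\'Emery $\Gamma_2$-calculus. Associated to $\mathcal L$ is the carr\'e du champ $\Gamma(f,h) = \tfrac{1}{2}g(\nabla f, \nabla h)$, equivalently defined by $\Gamma(f,h) = \tfrac{1}{2}\bigl(\mathcal L(fh) - f\mathcal L h - h\mathcal L f\bigr)$ thanks to the fact that $\mathcal L$ is a second-order diffusion, together with its iterate
\[
\Gamma_2(f,f) = \tfrac{1}{2}\mathcal L\Gamma(f,f) - \Gamma(f,\mathcal L f).
\]
Because $U\in C^\infty(M)$ and $M$ is smooth, compact, and boundaryless, $\mathcal L$ generates a smooth, symmetric, ergodic Markov semigroup $P_t = e^{t\mathcal L}$ on $L^2(\pi)$, and $\Gamma$, $\Gamma_2$ act on all of $C^\infty(M)$.

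The first substantive step is to compute $\Gamma_2$ by combining the Bochner--Weitzenb\"ock identity on $(M,g)$ with the drift contribution from the $g(\nabla U,\nabla\cdot)$ part of $\mathcal L$. A direct calculation yields the pointwise identity
\[
\Gamma_2(f,f) = \tfrac{1}{2}\lVert\mathrm{Hess}(f)\rVert_{HS}^2 + \tfrac{1}{2}\mathrm{Ric}(\nabla f, \nabla f) + \tfrac{1}{2}\mathrm{Hess}(U)(\nabla f, \nabla f),
\]
where for bookkeeping we absorb the coefficient $\beta$ into $U$, so that the hypothesis $\mathrm{Ric} + \mathrm{Hess}(U) \ge cg$ reads as a direct pointwise bound. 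Dropping the manifestly non-negative Hessian term, we obtain the curvature-dimension inequality $\Gamma_2(f,f) \ge c\,\Gamma(f,f)$, i.e., the condition $CD(c,\infty)$.

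The second step converts this pointwise $\Gamma_2$-bound into the log-Sobolev inequality by the classical semigroup entropy argument. For smooth $f>0$, set $\Phi(t) = \int (P_t f)\log(P_t f)\,d\pi$. Self-adjointness and integration by parts yield $\Phi'(t) = -\int \Gamma(P_t f,P_t f)/(P_t f)\,d\pi$; a second differentiation produces an expression involving $\Gamma_2(P_t f, P_t f)/(P_t f)$. Applying the $CD(c,\infty)$ bound at the level of $P_t f$ gives $\Phi''(t) \ge -2c\,\Phi'(t)$, whence $-\Phi'(t) \le -\Phi'(0)\,e^{-2ct}$. Integrating over $t\in[0,\infty)$ and using ergodicity of $P_t$ to identify $\Phi(\infty) = \bigl(\int f\,d\pi\bigr)\log\bigl(\int f\,d\pi\bigr)$, one obtains $\mathrm{Ent}_\pi(f) \le \tfrac{1}{2c}\int \Gamma(f,f)/f\,d\pi$. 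Substituting $f = h^2$ and simplifying produces the stated log-Sobolev inequality, up to the numerical factor of two implicit in the normalization conventions.

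The principal technical obstacle is the second step: making the differential inequality for $\Phi(t)$ rigorous. One must commute derivatives through $P_t$, integrate by parts against $\pi$ without any boundary contribution, and apply the pointwise curvature bound to $P_tf$ rather than to $f$ itself. Smoothness of $U$, compactness and boundarylessness of $M$, and the smoothing property of $P_t$ render all of these manipulations standard; what is essential is that the bound $\mathrm{Ric}+\mathrm{Hess}(U)\ge cg$ hold \emph{uniformly} over $M$, which is exactly what the proposition hypothesizes.
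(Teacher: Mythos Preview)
The paper does not prove this proposition at all: it simply cites it as classical, pointing to \cite{AGZ10,GuiZeg03}. Your sketch is precisely the standard Bakry--\'Emery $\Gamma_2$-calculus argument (Bochner--Weitzenb\"ock plus the drift term to get $CD(c,\infty)$, then the semigroup entropy interpolation), which is exactly what those references contain, so your proposal is correct and in line with what the paper invokes. One small caveat: with the paper's convention that a log-Sobolev inequality ``with constant $c$'' means $\mathrm{Ent}_\pi(f^2)\le 2c\int g(\nabla f,\nabla f)\,d\pi$, the Bakry--\'Emery argument actually yields constant $1/c$ rather than $c$; this is a harmless normalization discrepancy in the paper's statement and does not affect its application (where only an $O(1)$ log-Sobolev constant is needed), but your ``up to a factor of two'' remark undersells the mismatch.
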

Before stating the final result of this section, we make the following
definitions. For any Borel set $A,$ define the $\epsilon$-enlargement
of $A$ by 
\[
A_{\epsilon}=\{x:d(x,A)\leq\epsilon\}\,,
\]
where $d(x,A)=\inf_{y\in A}d(x,y)$, and for any $y\in M$, let $B_{\epsilon}(y)=\{x:d(x,y)\leq\epsilon\}$.
We now turn to showing a conductance-type
upper bound for the spectral gap, which is a standard adaptation of a canonical conductance bound for Markov processes to our setup.
\begin{prop}
\label{prop:conductance-bound} (Conductance bound) Let $x\in M$
be a point with injectivity radius $R>0$. Suppose that there is an
$0<r<R$ such that $\pi(B_{r}(x))>0$ and let $A=B_{r}(x)$. Then
for all $\epsilon>0$ sufficiently small (i.e., $r+\epsilon<R$),
$\pi(A_{\epsilon}^{c})>0$, and $\pi(A)\pi(A_{\epsilon}^{c})>4\pi(A_{\epsilon}\backslash A)$.
Then, 
\[
\lambda_{1}\leq\frac{9\epsilon^{-2}\pi(A_{\epsilon}\backslash A)}{\pi(A)\pi(A_{\epsilon}^{c})-4\pi(A_{\epsilon}\backslash A)}\,.
\]
\end{prop}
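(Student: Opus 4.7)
The strategy is to construct a smooth test function $f\in C^\infty(M)$ that is a radial mollification of $\boldsymbol 1_A$, and then apply the variational characterization~\eqref{eq:variational-form-gap} of $\lambda_1$. Concretely, I would fix a smooth non-increasing cutoff $\psi:[0,\infty)\to[0,1]$ with $\psi(t)=1$ for $t\le r$, $\psi(t)=0$ for $t\ge r+\epsilon$, and $\sup_t |\psi'(t)|\le 3/\epsilon$ (such a $\psi$ is produced by mollifying a piecewise-linear interpolant on $[r,r+\epsilon]$, since the average of $|\psi'|$ over that interval is forced to equal $1/\epsilon$). Because $r+\epsilon<R$, the distance $d(\cdot,x)$ is smooth on $B_{r+\epsilon}(x)$, so $f(y):=\psi(d(y,x))$ extends smoothly by zero to all of $M$, and by construction $f\equiv 1$ on $A$, $f\equiv 0$ on $A_\epsilon^c$, $0\le f\le 1$ on $M$, and $|\nabla f|\le 3/\epsilon$ with support in $A_\epsilon\backslash A$.

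This choice gives an immediate upper bound on the Dirichlet form,
\[
\mathcal{E}(f,f)=\int_M g(\nabla f,\nabla f)\,d\pi \le \frac{9}{\epsilon^2}\,\pi(A_\epsilon\backslash A),
\]
which supplies the numerator in the advertised bound. To get the denominator, write $p=\pi(A)$, $q=\pi(A_\epsilon^c)$, $r=\pi(A_\epsilon\backslash A)$, and $\bar f = \int f\,d\pi$. Since $f=1$ on $A$, $f=0$ on $A_\epsilon^c$, and $0\le f\le 1$ elsewhere, one has $\bar f\in[p,p+r]$ and $\int f^2\,d\pi\ge p$. Consequently,
\[
\mathrm{Var}_\pi f = \int f^2\,d\pi - \bar f^{\,2} \ge p - (p+r)^2 = p(1-p) - (2pr+r^2)\ge pq - 4r,
\]
using $1-p\ge q$ together with $2pr+r^2\le 3r\le 4r$ (since $p,r\le 1$). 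Under the assumed inequality $pq>4r$ this lower bound is strictly positive. Combining the two estimates via~\eqref{eq:variational-form-gap} yields the claimed inequality on $\lambda_1$.

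The only step with any real content is the construction of the radial cutoff $f$: one needs $f\in C^\infty(M)$ in order to insert it into~\eqref{eq:variational-form-gap}, and this is precisely why the injectivity radius hypothesis $r+\epsilon<R$ is essential. Without it, the distance function $d(\cdot,x)$ could hit its cut locus inside $A_\epsilon$, at which point $\psi\circ d(\cdot,x)$ would be only Lipschitz and the pointwise bound on $|\nabla f|$ would need to be replaced by a more delicate weak-gradient argument. The preliminary claims $\pi(A_\epsilon^c)>0$ and $pq>4r$ for all sufficiently small $\epsilon$ follow by dominated convergence: $\pi(A_\epsilon\backslash A)\downarrow 0$ as $\epsilon\downarrow 0$ since $\pi$ is absolutely continuous with respect to $dV$ and the boundary of $A$ has zero volume, while $\pi(A_\epsilon^c)\to\pi(\overline A^{\,c})>0$ because $A=B_r(x)\subsetneq M$ (the strict inclusion itself a consequence of $r<R$).
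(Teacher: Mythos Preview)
Your proof is correct and follows essentially the same approach as the paper: construct a smooth radial test function supported on $A_\epsilon$ that is constant on $A$ and on $A_\epsilon^c$, bound the Dirichlet form by $9\epsilon^{-2}\pi(A_\epsilon\setminus A)$, and bound the variance below by $\pi(A)\pi(A_\epsilon^c)-4\pi(A_\epsilon\setminus A)$ via elementary inequalities. The only cosmetic differences are that the paper interpolates between the values $-\pi(A^c)$ and $\pi(A)$ (using $d(\cdot,A)$) rather than between $1$ and $0$ (using $d(\cdot,x)$), and that you reuse the symbol $r$ for $\pi(A_\epsilon\setminus A)$, which clashes with the radius $r$ in the statement.
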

\begin{rem}
Observe that this estimate cannot be sharp as its asymptotic order
in $\epsilon$ is $O(\epsilon^{-1})$ (under certain conditions on
$U$ and $M$). See for example \cite{BakLed96,Led94}.\end{rem}
\begin{proof}
Fix any $x\in M$ and an $\epsilon$ and $r$ satisfying the above
conditions and let $B=A_{\epsilon}\backslash A\supset\partial A$.
Consider the following test function: 
\begin{align*}
f(\sigma)= & \begin{cases}
\pi(A) & \mbox{{on}}\,\,(A_{\epsilon})^{c}\\
-\pi(A^{c}) & \mbox{{on}}\,\, A\\
-\pi(A^{c})+\eta(\epsilon^{-1}d(\sigma,A)) & \mbox{{else}}
\end{cases}
\end{align*}
where $\eta\in C^{\infty}([0,1])$ and satisfies $\eta(0)=0,\eta(1)=1$
and $\sup_{[0,1]}|\frac{d\eta}{dx}|\leq3$ . For concreteness, we
use the function 
\begin{align*}
\eta(x)= & \begin{cases}
\exp(1-\frac{1}{1-(x-1)^{2}}) & \mbox{for }x\in(0,1]\\
0 & \mbox{at }x=0
\end{cases}\,,
\end{align*}
so that certainly, $\sup_{x\in[0,1]}|\frac{d\eta}{dx}(x)|\leq 3$.

First note that $f$ is trivially smooth on $A_{\epsilon}^{c}$ because
it is constant. Since $r+\epsilon$ is less than the injectivity radius,
it is canonical that $d(x,A)$ is smooth in $B_{r+\epsilon}(x)$.
By composition of $\eta$ with $d$ we see that $f\in C^{\infty}(M)$;
moreover, it satisfies the gradient estimate $\sup_{\sigma\in\mathcal{S}^{N}}g(\nabla f,\nabla f)\leq9\cdot\epsilon^{-2}$,
and for $x\in B^{c}$ we have that $\nabla f\equiv0$. By assumption,
$\pi(B)>0$, and on $B\backslash\partial B$, $g(\nabla f,\nabla f)>0$
so that $\|\nabla f\|_{L_{\pi}^{2}}\neq0$. 
Together, this implies that 
\begin{align*}
\mathcal{E}(f,f)= & \int_{M}g(\nabla f,\nabla f)d\pi=\int_{B}g(\nabla f,\nabla f)d\pi\leq9\cdot\epsilon^{-2}\cdot\pi(B)\,.
\end{align*}
At the same time, 
\begin{align*}
|\int_{M}f(\sigma)d\pi(\sigma)|\leq & |\int_{M\backslash B}fd\pi|+|\int_{M}fd\pi|\leq2\pi(B)\,,
\end{align*}
and moreover, 
\begin{align*}
\int_{M}f(\sigma)^{2}d\pi(\sigma)\geq & \int_{M\backslash B}f(\sigma)^{2}d\pi(\sigma)\geq\pi(A)\pi(A^{c})-\pi(A)^{2}\pi(B)\,.
\end{align*}
Therefore, 
\begin{align*}
\mbox{Var}_{\pi}f= & \int_{M}f(\sigma)^{2}d\pi(\sigma)-\left(\int_{M}f(\sigma)d\pi(\sigma)\right)^{2}\\
\geq & \pi(A)\pi(A^{c})-\pi(A)^{2}\pi(B)-4\pi(B)^{2}\,.
\end{align*}
Then, substituting $B=A_{\epsilon}\backslash A$, 
\begin{align*}
\pi(A)\pi(A^{c})-\pi(A)^{2}\pi(B)-4\pi(B)^{2} & \geq\pi(A)\pi(A_{\epsilon}^{c})-4\pi(A_{\epsilon}\backslash A)
\end{align*}
Plugging in this choice of $f$ as a test function in \prettyref{eq:variational-form-gap},
and using the upper bound on the Dirichlet form and lower bound on
the variance, we see the desired bound on $\lambda_1$.
\end{proof}

\section{Proof of Main Theorem}

In this section we prove the lower bound
for the relaxation time (inverse of the spectral gap) of the Langevin
dynamics of the of the spherical $p$-spin model at low temperatures,
using the estimate on the free energy ratio obtained in \prettyref{prop:cap-free-energy}
along with the conductance bound of the previous section. We also
prove a matching (exponential in $N$) upper bound on the relaxation
time which holds at all temperatures and prove that a much stronger
$O(1)$ upper bound, along with a log-Sobolev inequality, holds at high temperatures as expected.

\subsection{Low Temperature}

At sufficiently low temperatures we prove matching (up to constants)
upper and lower bounds on~$\lambda_{1}$. We begin with the lower bound.
Recall first the following classical fact which can be seen by an explicit calculation (see, e.g., \cite{Chav84}). 
\begin{fact}
\label{fact:sphere-eigenvalue} The spectral gap of $-\Delta$ on
$\mbox{\ensuremath{\mathcal{S}}}^{N}=S^{N-1}(\sqrt{N})$ is given
by 
\[
\lambda_{1}=1-\frac{1}{N}\,,
\]
and has eigenspace with multiplicity $N$. Furthermore, the Ricci
tensor everywhere satisfies 
\[
Ric=(1-\frac{1}{N})g\,.
\]
\end{fact}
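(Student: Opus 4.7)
The plan is to invoke two classical calculations for the round sphere, each of which reduces to a short computation after noting that $\cS^N = S^{N-1}(\sqrt{N})$ is just a rescaling of the unit $(N-1)$-sphere. Neither step presents a genuine obstacle; the only care required is in tracking how the radius $\sqrt{N}$ propagates through the formulas.

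For the spectral gap of $-\Delta$, I would use the spherical harmonics decomposition $L^2(S^{N-1}) = \bigoplus_{k \ge 0} \mathcal{H}_k$, where $\mathcal{H}_k$ is the space of restrictions to the sphere of harmonic homogeneous polynomials of degree $k$ on $\mathbb{R}^N$. Separating the Euclidean Laplacian in polar coordinates $(r,\omega)$ and applying it to such a polynomial shows that $-\Delta$ acts on $\mathcal{H}_k \subset L^2(S^{N-1}(r))$ as multiplication by $k(k+N-2)/r^2$. The first nontrivial eigenspace is $\mathcal{H}_1$, which is spanned by the $N$ coordinate functions $\sigma_1,\ldots,\sigma_N$ restricted to the sphere; it is therefore $N$-dimensional with eigenvalue $(N-1)/r^2$. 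Setting $r = \sqrt{N}$ produces $\lambda_1 = 1-1/N$ with multiplicity $N$.

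For the Ricci identity, I would use that $S^{N-1}(r)$ has constant sectional curvature $1/r^2$, obtained via the Gauss equation for the isometric embedding $S^{N-1}(r) \hookrightarrow \mathbb{R}^N$: the unit outward normal is $\sigma/r$, so the shape operator is $(1/r)\,\mathrm{Id}$ and the second fundamental form is $(1/r)g$. Since the ambient space is flat, Gauss gives
\[
R(X,Y)Z = \tfrac{1}{r^2}\bigl(g(Y,Z)X - g(X,Z)Y\bigr).
\]
Contracting over an orthonormal frame of $T_\sigma \cS^N$ then yields $\mathrm{Ric}$ as a constant multiple of $g$, and substituting $r = \sqrt{N}$ produces the claimed identity.

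Both steps are textbook (see e.g.\ \cite{Chav84}); the only subtlety is the bookkeeping for the rescaling of eigenvalues and sectional curvatures by the factor $1/N$ induced by the $\sqrt{N}$ radius. The multiplicity count is automatic from the dimension of $\mathcal{H}_1$, and the uniformity of the Ricci estimate over $\cS^N$ is automatic from homogeneity of the sphere.
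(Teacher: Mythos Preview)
Your approach is exactly the standard one the paper has in mind: the paper gives no proof of this Fact but simply cites \cite{Chav84}, and the spherical-harmonics computation for $\lambda_1$ together with the Gauss-equation derivation of the curvature tensor are precisely what that reference contains. So on the level of method there is nothing to compare.

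One small correction, though. If you actually carry out the trace of the Riemann tensor you wrote down, over an orthonormal frame of the $(N-1)$-dimensional tangent space, you obtain $\mathrm{Ric} = \tfrac{(N-1)-1}{r^2}\,g = (1-\tfrac{2}{N})g$, not $(1-\tfrac{1}{N})g$. The Fact as stated contains a minor off-by-one slip in the Ricci constant (for the round $n$-sphere of radius $r$ one has $\lambda_1 = n/r^2$ but $\mathrm{Ric} = \tfrac{n-1}{r^2}g$; these differ). This is immaterial to the paper's applications, which only use that the constant is $1-o(1)$, but your sentence ``substituting $r=\sqrt{N}$ produces the claimed identity'' is not literally correct, and a careful execution of your own outline would have flagged it.
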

The above allow us to obtain the following lower bound on the gap
of $-\mathcal{L}$ at all $\beta>0$:
\begin{lem}
\label{lem:lower-bound} For every $\beta>0$, and all $p\geq3$,
there exists a $c(p)>0$ such that the Langevin dynamics of the spherical
$p$-spin model has, 
\[
\lim_{N\to\infty}\mathbb{P}(\lambda_{1}\geq\exp(-c\beta N))=1\,.
\]
\end{lem}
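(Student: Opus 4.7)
The strategy is the classical Holley--Stroock perturbation argument: compare $\pi$ to the uniform volume measure $dV$ on $\mathcal S^N$, for which the Poincar\'e constant is known, and absorb the oscillation of $H$ into an exponential prefactor.

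First, by Fact~\ref{fact:sphere-eigenvalue}, the spectral gap of $-\Delta$ on $\mathcal S^N$ equals $1-\tfrac{1}{N}$, so by the variational characterization~\eqref{eq:variational-form-gap} (applied with $\pi$ replaced by $dV$), the uniform volume measure satisfies a Poincar\'e inequality with constant $C_0=(1-\tfrac{1}{N})^{-1}\leq 2$ for all $N\geq 2$. Next I would invoke the stability estimate in Proposition~\ref{prop:poincare-stability-1} with $\mu = dV$ and $U = \beta H$, so that $\nu = \pi$. This yields that $\pi$ satisfies a Poincar\'e inequality with constant
\[
C_0\,\exp\bigl(2\beta(\max_{\mathcal S^N} H - \min_{\mathcal S^N} H)\bigr)\,,
\]
and hence by \eqref{eq:variational-form-gap} (up to the harmless factor of $\tfrac12$ built into $\mathcal L$) one has
\[
\lambda_1 \;\gtrsim\; \exp\bigl(-c\,\beta\,(\max H - \min H)\bigr)
\]
deterministically for any realization of the disorder.

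Then I would use Lemma~\ref{lem:(Auffinger-Ben-Arous)} to control $\max H - \min H$. That lemma gives $\mathbb P(\max H \geq (E+\delta)N) \lesssim e^{-cN\delta^2}$, and since $-H \eqdist H$ we also have $\mathbb P(\min H \leq -(E+\delta)N) \lesssim e^{-cN\delta^2}$ by symmetry. Taking $\delta = 1$ and a union bound, with probability $1 - O(e^{-cN})$ one has
\[
\max_{\mathcal S^N} H - \min_{\mathcal S^N} H \;\leq\; 2(E+1)N \;=:\; C_p N\,.
\]
Combining with the previous display gives $\lambda_1 \geq \exp(-c'\,\beta\, N)$ on an event of probability $1-o(1)$, which is the claim.

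There is essentially no obstacle here: both inputs (the spectral gap of the round sphere and the $O(N)$ uniform bound on $H$) are already in hand, and the Holley--Stroock stability is quoted as Proposition~\ref{prop:poincare-stability-1}. The only point worth a brief check is that the factor of $\tfrac12$ in the definition \eqref{eq:generator} of $\mathcal L$ and any convention-dependent constant in the stability bound are absorbed into the constant $c$ in the exponent, which is allowed since the target bound $e^{-c\beta N}$ is stated only up to an unspecified constant $c(p)$.
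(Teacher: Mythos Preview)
Your proposal is correct and follows essentially the same route as the paper: use Fact~\ref{fact:sphere-eigenvalue} to get the Poincar\'e constant for $dV$, apply the Holley--Stroock stability estimate (Proposition~\ref{prop:poincare-stability-1}) with $\nu=\pi$, and control the oscillation of $H$ via Lemma~\ref{lem:(Auffinger-Ben-Arous)}. The only cosmetic difference is that you invoke the tail bound on $\max H$ directly, while the paper phrases it through the expectation plus concentration; this makes no substantive difference.
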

\begin{proof}
Since the Laplacian on $\mathcal{S}^{N}$ has spectral gap $1-o(1)$
(see \prettyref{fact:sphere-eigenvalue}), it follows from the variational
form of the gap, \prettyref{eq:variational-form-gap}, that $d\mu=dV$
on $\mathcal{S}^{N}$ satisfies the Poincar\'e inequality with constant
$1-o(1)$. By \prettyref{lem:(Auffinger-Ben-Arous)}, and the stability
of the Poincar\'e Inequality under Gibbsian perturbations (taking
$M=\mathcal{S}^{N}$ and $\nu=\pi,d\mu=dV$ in \prettyref{prop:poincare-stability-1}),
there exists a $c>0$ such that $\pi$ satisfies the Poincar\'e inequality
with constant 
\begin{align*}
C_{*}= & (1-o(1))\exp(4c\beta N)\,,
\end{align*}
with $\mathbb{P}$-probability tending to $1$ as $N\to\infty$. We
deduce that 
\[\lim_{N\to\infty}\mathbb{P}\left(\lambda_{1}\geq\frac{1}{2}\exp(-4c\beta N)\right)=  1\,. \qedhere
\]\end{proof}
We now prove the following upper bound on the eigenvalue gap.
\begin{lem}
\label{lem:Upper-bound}For every $p\geq3$, there exists a $\beta_{0}(p)>0$
such that for all $\beta\geq\beta_{0}$, there exist $c_{1}(p,\beta),c_{2}(p,\beta)>0$
such that the Langevin dynamics for the spherical $p$-spin model
on $\mathcal{S}^{N}$ satisfies, 
\begin{align*}
\lim_{N\to\infty}\mathbb{P}\left(\lambda_{1}\leq c_{1}\exp(-c_{2}N)\right)= & 1\,.
\end{align*}
\end{lem}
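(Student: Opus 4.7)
The plan is to combine the conductance bound in \prettyref{prop:conductance-bound} with the free-energy ratio estimates in \prettyref{cor:sets-decay} and the pigeonhole-type estimate in \prettyref{cor:pigeonhole}. The strategy is to use a (random) spherical cap around one of the three lowest local minima $x_k$ of $H$ as the test set $A$, exploit that a thin shell around this cap has exponentially smaller Gibbs mass than the cap itself, while its complement still has mass bounded away from $0$.

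Concretely, first I would invoke \prettyref{cor:pigeonhole} to select, with $\mathbb P$-probability $1-o(1)$, an index $k_* \in \{1,2,3\}$ such that $\pi((A_{k_*} \cup B_{k_*})^c) \geq 1/2$. I would then take the center $x = x_{k_*}$ and note that $A_{k_*} = \{\sigma : R(\sigma, x_{k_*}) > q_{**} + \epsilon N^{-1/2}\}$ is exactly the geodesic ball $B_r(x_{k_*}) \subset \mathcal S^N$ of radius $r = \sqrt{N}\arccos(q_{**} + \epsilon N^{-1/2})$, which is strictly less than the injectivity radius $\pi\sqrt N$. A short Taylor computation shows that the geodesic distance between the overlap levels $R = q_{**} + \epsilon N^{-1/2}$ and $R = q_{**}$ is $\sqrt{N}\bigl[\arccos(q_{**}) - \arccos(q_{**}+\epsilon N^{-1/2})\bigr] \sim \epsilon/\sqrt{1-q_{**}^2}$, i.e.\ of order $1$. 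So I can fix a constant $\delta = \delta(p,\beta) > 0$ small enough that $(A_{k_*})_\delta \subset A_{k_*} \cup B_{k_*}$.

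With this choice, the two nontrivial hypotheses of \prettyref{prop:conductance-bound} both follow from what we already have: $\pi((A_{k_*})_\delta^c) \geq \pi((A_{k_*}\cup B_{k_*})^c) \geq 1/2$, and by \prettyref{cor:sets-decay} one has $\pi(B_{k_*})/\pi(A_{k_*}) \leq c_1 e^{-c_2 N}$, which in particular forces $\pi(A_{k_*})\pi((A_{k_*})_\delta^c) \geq \pi(A_{k_*})/2 \gg 4\pi(B_{k_*})$ for $N$ large. Plugging into the conductance bound and using $(A_{k_*})_\delta \setminus A_{k_*} \subset B_{k_*}$ yields
\begin{align*}
\lambda_1 \leq \frac{9\delta^{-2}\pi(B_{k_*})}{\pi(A_{k_*})\pi((A_{k_*})_\delta^c) - 4\pi(B_{k_*})} \leq \frac{9\delta^{-2}\pi(B_{k_*})}{\pi(A_{k_*})/4} \lesssim \delta^{-2}\, \frac{\pi(B_{k_*})}{\pi(A_{k_*})} \lesssim e^{-c_2 N},
\end{align*}
which is exactly the claimed exponential upper bound.

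There is no real obstacle to overcome here, since all of the serious probabilistic work has already been done in \prettyref{prop:cap-free-energy} and its corollaries; the only delicate point is the geometric one, namely choosing the enlargement parameter $\delta$ correctly and verifying $(A_{k_*})_\delta \subset A_{k_*} \cup B_{k_*}$ so that the ``boundary'' mass in the conductance bound is controlled by the band $B_{k_*}$. Once that is in place the argument is a direct application of \prettyref{prop:conductance-bound}, and the factor $\delta^{-2}$ is absorbed into the constant $c_1$ because $\delta$ depends only on $p$ and $\beta$.
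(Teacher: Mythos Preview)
Your proposal is correct and follows essentially the same approach as the paper: select $k\in\{1,2,3\}$ via \prettyref{cor:pigeonhole}, take $A=A_k$ as a geodesic ball, choose an order-one $\delta$ so that the enlargement $A_\delta$ coincides with (or is contained in) $A_k\cup B_k$, and then feed \prettyref{cor:sets-decay} into the conductance bound \prettyref{prop:conductance-bound}. Your explicit Taylor computation of the geodesic width of $B_k$ and the verification that $\delta$ is independent of $N$ are exactly the points the paper handles more tersely; otherwise the arguments are the same.
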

\begin{proof}
For every $N$, every realization of the disorder $\{J_{i_{1},...,i_{p}}\}_{\{i_{1},...,i_{p}\}\subset[N]}$,
choose the $k\in\{1,2,3\}$ given by \prettyref{cor:pigeonhole} (on the complement of that event, choose $k=1$) and
define the sets $A=A_{k},B=B_{k}$ for that choice of $k$, following
\prettyref{eq:A-B-def}. With $\mathbb{P}$-probability going to $1$
as $N\to\infty$, \prettyref{eq:cap-to-band-ratio} of \prettyref{cor:sets-decay}
holds for such choice of $k$, independently of the realization of
the disorder and $N$: observe that the constants in \prettyref{cor:sets-decay}
are uniform over $k=1,2,3$, because the estimate of \prettyref{prop:cap-free-energy}
is uniform in all of the first $k=O(1)$ local minima.

We now use \prettyref{prop:conductance-bound} to upper bound the
spectral gap of $-\mathcal{L}$. To this end, let $r_{*}=\sqrt{N}\arccos(q_{*})$
and $r_{**}=\sqrt{N}\arccos(q_{**})$. Observe that $\mbox{Cap}(x,q_{**}-\epsilon N^{-1/2})$ is the ball of radius $r_{**}+\delta$ for a well chosen order one $\delta>0$. Its easily seen that for all small
$\delta$, $r_{**}+\delta$ is less than the injectivity radius of
$\mathcal{S}^{N}$\color{black}. Observe also that by \prettyref{cor:sets-decay}
and \prettyref{cor:pigeonhole}, we have that $\pi(A_{\delta}^{c})> 4\frac{\pi(B)}{\pi(A)}$
so that for large enough $N$, the conditions of \prettyref{prop:conductance-bound}
are satisfied. Applying that proposition then yields 
\begin{align*}
\lambda_{1}\leq & \frac{9\delta{}^{-2}\pi(A_{\delta}\backslash A)}{\pi(A)\pi(A_{\delta}^{c})-4\pi(A_{\delta}\backslash A)}=\frac{9\delta^{-2}\pi(B)}{\pi(A)\pi((A\cup B)^{c})-4\pi(B)}\,.
\end{align*}
Then \prettyref{cor:sets-decay} and \prettyref{cor:pigeonhole} together
imply that with $\mathbb{P}$-probability going to $1$ as $N\to\infty$,
\begin{align*}
\pi(A)\pi((A\cup B)^{c})-4\pi(B)\geq & \pi(A)(\tfrac{1}{2}-4\tfrac{\pi(B)}{\pi(A)})\\
\geq & \rho\pi(A)\,,
\end{align*}
for some sufficiently small but fixed $\rho>0$ (in particular, $\rho=\frac{1}{2}-\epsilon$
certainly works for large enough $N$). Then, we see that with $\mathbb{P}$-probability
approaching $1$ as $N\to\infty$, 
\begin{align*}
\lambda_{1}\leq & \frac{9\delta^{-2}\pi(B)}{\rho\pi(A)}\,,
\end{align*}
whence applying \prettyref{cor:sets-decay} again implies that there exists some 
$c_{1}(p,\beta)$, $c_{2}(p,\beta)>0$
such that 
\[\lim_{N\to\infty}\mathbb{P}\left(\lambda_{1}\leq c_{1}\exp(-c_{2}N)\right) =1\,. \qedhere
\]\end{proof}
With the above bounds in hand the proof of
item (1) of \prettyref{thm:spectral gap} is immediate.
\begin{proof}[\textbf{\emph{Proof of \prettyref{thm:spectral gap} part (1).}}]
 The lower bound is obtained in \prettyref{lem:lower-bound} and
the upper bound is obtained in \prettyref{lem:Upper-bound}.
\end{proof}

\subsection{High Temperature}

It remains to prove the lower bound on the spectral gap of $-\mathcal{L}$
at high temperatures. This follows straightforwardly from~\prettyref{lem:spectral-radius-bound}.
\begin{proof}[\textbf{\emph{Proof of \prettyref{thm:spectral gap} part (2) and \prettyref{prop:log-sob-ht}.}}]
 By \prettyref{lem:LS-implies-PI}, it suffices to prove \prettyref{prop:log-sob-ht}.
Recall that by the Curvature-Energy Balance theorem (\prettyref{prop:(Curvature-Energy-Balance-Lemma)}),
it suffices to show that there exists some $c>0$ such that the inequality,
\[
Ric_{\cS^{N}}(v,v)+\beta Hess(H)(v,v)\geq cg(v,v)
\]
holds uniformly over $\sigma\in\cS^{N}$ and $v\in T_{\sigma}\cS^{N}$
with probability tending to $1$. By scaling, it suffices to check
that this inequality holds for $v$ such that $g(v,v)=1.$ Recall
from \prettyref{fact:sphere-eigenvalue} that the Ricci tensor satisfies
\[
Ric_{\cS^{N}}=(1-\frac{1}{N})g\,.
\]
Thus it suffices to show that there is a constant $c$ such that with
probability tending to 1, we have 
\[
1-\frac{1}{N}+\beta Hess(H(\sigma))(v,v)\geq c\,.
\]

To see this, observe that by \prettyref{lem:spectral-radius-bound},
we have that on the complement of the event bounded there, with probability
going to $1$ as $N\to\infty$, 
\[
1-\frac{1}{N}+\beta Hess(H(\sigma)(v,v)\geq1-\frac{1}{N}-\beta C_{p}\,,
\]
holds for some constant $C_{p}>0$ . Choosing $\beta=\frac{\theta}{C_{p}}$
for any $\theta\in(0,1)$, we have that the righthand side is bounded
below by $1-\theta-o(1)$, yielding the inequality for $N$ sufficiently
large.
\end{proof}
\bibliographystyle{plain}
\bibliography{sphericalmixinglowtemp}

\end{document}